\newtheorem{Def}{Definition}
\newtheorem{Prop}{Proposition}
\newtheorem{Thm}{Theorem}
\newtheorem{Cor}{Corollary}
\newproof{proof}{Proof}
\newenvironment{keywords}
{ \textbf{Keywords:} \begin{itshape} }
{\end{itshape} }
\begin{document}

\pagestyle{plain}
\setcounter{page}{1}
\pagenumbering{arabic}

\begin{frontmatter}

\title{Stable adiabatic times for Markov chains}

\author[Math]{Kyle Bradford\corref{cor1}}

\author[Math]{Yevgeniy Kovchegov}

\author[EECS]{Thinh Nguyen}

\address[Math]{Department of Mathematics \\ 
Oregon State University \\ 
368 Kidder Hall \\
Corvallis, OR 97331 á 541-737-4686, USA}

\address[EECS]{School of Electrical Engineering and Computer Science \\
Oregon State University \\
1148 Kelley Engineering Center \\
Corvallis, OR 97331-5501, USA}

\ead{bradfork@science.oregonstate.edu (Kyle Bradford), kovchegy@math.oregonstate.edu (Yevgeniy Kovchegov), thinhq@eecs.oregonstate.edu}

\cortext[cor1]{Corresponding author.}

\begin{abstract}
\label{sec: abstract}
In this paper we continue our work on {\bf adiabatic time} of time-inhomogeneous Markov chains first introduced in \cite{kovchegov2010note}  and \cite{bradford2011adiabatic}.  Our study is an analog to the well-known Quantum Adiabatic (QA) theorem which characterizes the quantum adiabatic time for the evolution of a quantum system as a result of applying of a series of Hamilton operators, each is a linear combination of two given initial and final Hamilton operators, i.e.
$\mathbf{H}(s) = (1-s)\mathbf{H_0} + s\mathbf{H_1}$. Informally, the quantum adiabatic time of a quantum system specifies the speed at which the Hamiltonian operators changes so that the ground state of the system at any time $s$ will always remain $\epsilon$-close to that induced by the Hamilton operator $\mathbf{H}(s)$ at time $s$. Analogously, we derive a sufficient condition for the stable adiabatic time of a time-inhomogeneous Markov evolution specified by applying a series of transition probability matrices, each is a linear combination of two given irreducible and aperiodic transition probability matrices, i.e., $\mathbf{P_{t}} = (1-t)\mathbf{P_{0}} + t\mathbf{P_{1}}$.  In particular we show that the stable adiabatic time $t_{sad}(\mathbf{P_{0}}, \mathbf{P_{1}}, \epsilon) = O \left( t_{mix}^{4}(\epsilon \slash 2) \slash \epsilon^{3} \right), $  where $t_{mix}$ denotes the maximum mixing time over all $\mathbf{P_{t}}$ for $0 \leq t \leq 1$.

\end{abstract}

\end{frontmatter}
\thispagestyle{empty}

\begin{keywords}
time-inhomogeneous Markov chain, mixing time, stability, adiabatic time
\end{keywords}

\section{\sc {Introduction}}
\label{sec: introduction}

In this paper we study the stability of time-inhomogeneous Markov chains via the notion of stable adiabatic time, an extension of the adiabatic time first introduced in \cite{kovchegov2010note} and \cite{bradford2011adiabatic}.  Our study is motivated in part by the well-known Quantum Adiabatic (QA) theorem which characterizes the quantum adiabatic time for the evolution of a quantum system as a result of applying of a series of Hamiltonian operators, each is a linear combination of two pre-specified initial and final Hamilton operators, i.e., $\mathbf{H}(s) = (1-s)\mathbf{H_0} + s\mathbf{H_1}$.  Quantum adiabatic time of a quantum system, to be discussed in detail shortly, specifies the rate at which Hamiltonian operators change so that the ground state of the system at any time $s$ will always remain $\epsilon$-close to that induced by the Hamilton operator $\mathbf{H}(s)$ at time $s$.   The first Quantum Adiabatic theorem was stated in the 1920s by M. Born and V.A. Fock \cite{fock2004selected}, and have been subsequently studied in \cite{kato1950on}  among others.  Recently, the quantum adiabatic time plays an important role in the development of quantum adiabatic computing.  Specifically, quantum adiabatic algorithms are constructed as a sequence of Hamilton operators applied to a quantum system in such a way that drives the system to the desirable state or output, see for example \cite{krovi2010adiabatic}.  Thus, the quantum adiabatic time is a natural choice for characterizing the running times of adiabatic quantum algorithms.


We analogously derive a sufficient condition for the stable adiabatic time of a time-inhomogeneous Markov evolution specified by applying a series of transition probability matrices, each is a linear combination of two given irreducible and aperiodic transition probability matrices, i.e., $\mathbf{P_{t}} = (1-t)\mathbf{P_{0}} + t\mathbf{P_{1}}$.  In particular we show that the stable adiabatic time $t_{sad}(\mathbf{P_{0}}, \mathbf{P_{1}}, \epsilon) = O \left( t_{mix}^{4}(\epsilon \slash 2) \slash \epsilon^{3} \right), $  where $t_{mix}$ denotes the maximum mixing time induced over all the transition probability matrices during the evolution.  We note the stable adiabatic time for time-inhomogeneous Markov has recently found practical applications in network design.  We refer to the recent work of Rajagoplan et al. \cite{rajagopalan2009network} where adiabatic time were used to design optimal medium access protocols in wireless networks.  Recently, the time-inhomogeneous evolution $\mathbf{P_{t}} = (1-t)\mathbf{P_{0}} + t\mathbf{P_{1}}$  has also been used to describe the performance of queueing models \cite {zacharias2011adiabatic} for networks.  Specifically,  in this setting, the arrival rate of packet at the queue is assumed to be unknown and is estimated progressively. Appropriate sending rate is then determined based on this estimation. As a result, $\mathbf{P_{t}}$ describes the a queuing policy (or sending rate) which varies with time based on the new statistics.  The adiabatic time is then used to characterize the performance of the queuing model under uncertainty due error in estimation. To motivate our work, we now provide a short overview of the Quantum Adiabatic theorem as discussed in \cite{ambainis2006elementary}.

\subsection{\sc {Quantum Adiabatic Theorem}}
\label{subsec: QAT}
\

Let $\mathbf{H_0}$ and $\mathbf{H_1}$ be two given Hamiltonian operators.  Let $T> 0$  be a positive integer.  For $t \in [0,T]$, denote $\mathbf{H}(s) = (1-s)\mathbf{H_0} + s\mathbf{H_1}$, then $\mathbf{H}(s)$ is also a Hamiltonian operator dependent on a time parameter $s = t \slash T$.  In general, a Hamiltonian operator described above, is not required to have a finite number of physical pure states. In this paper we only consider transition probability matrices of finite dimension, which are analogous the Hamiltonian operators with a finite number of $n$  physical pure states.

The ground state in quantum mechanics refers to the lowest-energy state.  The quantum adiabatic theorem concerns one eigenstate of the energy function, the ground state.  Here we denote $\mathbf{\Phi}(s)$  as the ground state of $\mathbf{H}(s)$  and we let $\gamma(s)$  be the eigenvalue associated with it.  For a given $T > 0$, when we say that we apply the adiabatic evolution given by $\mathbf{H}$  and $\mathbf{\Phi}$  for time $T$ we mean to initialize the system in the state $\mathbf{\Phi}(0)$  and then apply the continuously varying Hamiltonian $\mathbf{H}(t \slash T)$  for time $t \in [0,T]$.

Given $\epsilon>0$  the quantum adiabatic theorem informally says that if, by selecting a large enough value of $T$, we assume that the change in the Hamiltonian happens slowly enough, then when we apply the adiabatic evolution given by $\mathbf{H}(s)$  and $\mathbf{\Phi}$  for time $T$  we will be in an $\epsilon$-ball around $\mathbf{\Phi}(1)$ with respect to the $l^{2}(\mathbb{C}^{n})$-norm.  This leads us to the following definition.

\begin{Def} \label{def: three}
Given $\epsilon>0$ the \underline{quantum adiabatic time}, denoted as $t_{qad}(\mathbf{H}, \mathbf{\Phi}, \epsilon)$, is equal to the smallest positive time, T,  required to make the application of the adiabatic evolution given by $\mathbf{H}$  and $\mathbf{\Phi}$  for time $T$  arrive in an $\epsilon$-ball around $\mathbf{\Phi}(1)$ with respect to the $l^{2}(\mathbb{C}^{n})$-norm.
\end{Def}

The quantum adiabatic theorem gives a sufficient condition for the quantum adiabatic time.  The version of quantum adiabatic theorem as proved in \cite{ambainis2006elementary} motivated our work in \cite{kovchegov2010note} and \cite{bradford2011adiabatic}.  Supposing that all eigenvalues of $\mathbf{H}(s)$  are either smaller than $\gamma(s) - \Delta$  or larger than $\gamma(s) + \Delta$ (i.e. there is a spectral gap of $\Delta$  around $W_{1}(s)$), then  \cite{ambainis2006elementary} tells us that $$t_{qad}(\mathbf{H}, \mathbf{\Phi}, \epsilon) = O \left( \frac{1}{\epsilon^{2} \Delta^{4}} \right).$$

In \cite{fock2004selected}  the authors suggest that the quantum adiabatic theorem can be described as follows: for an infinitely slow change of the system, i.e., at an infinitely large value of $T$,  the probability of a quantum state changing energy levels remains infinitely small, even for finite values of $s = t \slash T$  so that $s \in [0,1]$. 

While the result in \cite{ambainis2006elementary} was used to motivate the work in \cite{kovchegov2010note}  and \cite{bradford2011adiabatic}, the latter notion in \cite{fock2004selected} is what motivated the work in this paper.  We describe an analogue to this quantum system in the context of the classical time-inhomogeneous Markov chains. 

\subsection{\sc {Main Result}
\label{subsec: compare}}
\

Markov chains over a finite number of $n$  states are $l^{1}(\mathbb{R}_{+}^{n})$-norm preserving processes.  We denote $\| \cdot \|_{1}$  as the $l^{1}(\mathbb{R}^{n})$ norm throughout the paper.  We assume that the reader has a prior understanding of basic types of Markov chains, such as irreducible, aperiodic, time-homogeneous and time-inhomogeneous Markov chains (see \cite{isaacson1976markov}, \cite{karlin1975first} and \cite{levin2009markov}). Throughout the paper we focus on discrete-time Markov chains and use $\| \cdot \|_{TV}$  to denote the total variation norm.
We begin with the definition of mixing time.

\begin{Def} \label{def: one}
For $\epsilon > 0$  the \underline{mixing time}  of a time-homogeneous, irreducible and aperiodic Markov chain governed by a probability transition matrix $\mathbf{P}$, which has unique stationary distribution $\mathbf{\pi}$,  is defined as:
\begin{equation} \label{eq: one}
t_{mix}(\mathbf{P}, \epsilon) = \inf \{ T \in \mathbb{N} : \| \mathbf{\nu} \mathbf{P}^{T} - \mathbf{\pi} \|_{TV} \leq \epsilon \}
 \end{equation}

\noindent over all distributions $\mathbf{\nu}$. \\
\end{Def}

 Mixing times of time-homogeneous irreducible and aperiodic Markov chains have been well studied, see for example \cite{aldous2002reversible} and \cite{levin2009markov}. Mixing time characterizes how fast a chain converges to its stationary distribution.  It is particularly important for bounding the running times of many randomized algorithms, for example the simulated annealing algorithm and metropolis-hasting algorithm as explained in \cite{ross2006simulation}. There have also been recent studies on mixing times for time-inhomogeneous Markov chain \cite{saloff2006convergence}, \cite{saloff2009merging} and \cite{saloff2011merging} which is more closely related to our work.  For example, Saloff-Coste and Z$\acute{\mathrm{u}}\tilde{\mathrm{n}}$iga  \cite{saloff2006convergence} consider spectral bounds of the mixing time for time-inhomogeneous Markov chains on a finite state space when each step in transition corresponds to an ergodic Markov kernel with the same stationary measure.  In their subsequent work \cite{saloff2009merging},  Saloff-Coste and Z$\acute{\mathrm{u}}\tilde{\mathrm{n}}$iga employed spectral techniques to obtain asymptotic behavior of time-inhomogeneous Markov chains.  In this work, the concept of c-stability is introduced, which is an abstraction of requiring all Markov kernels to have the same stationary measure, and bounds for this kind of stability are obtained.

Our work, on the other hand, studies a specific class of discrete-time, time-inhomogeneous Markov chains that was first constructed  in \cite{kovchegov2010note}.  Specifically, we consider the probability transition matrices for two discrete-time, time-homogeneous, irreducible and aperiodic Markov chains over $n$  states.  We denote these matrices as $\mathbf{P_{0}}$  and $\mathbf{P_{1}}$  throughout the paper and call these the initial and the final transition matrices respectively.  We define for $t \in [0,1]$  a class of probability transition matrices $\{ \mathbf{P_{t}} \}_{t \in [0,1]}$  such that $$ \mathbf{P_{t}} = (1 - t)\mathbf{P_{0}} + t\mathbf{P_{1}}. $$

For $t \in [0,1]$  we also define $\mathbf{\pi_{t}}$  to be the stationary distribution of $\mathbf{P_{t}}$.  Given $T \in \mathbb{N}$, the specific time-inhomogeneous Markov chain being considered in our paper is the one such that the probability transition matrix at time $k$  is $\mathbf{P_{\frac{k}{T}}}$  for $0 \leq k \leq T$.  We consider the class of all time-inhomogeneous Markov chains of this type over all $T \in \mathbb{N}$.  We will say that any Markov chain in this class is governed by an adiabatic evolution between $\mathbf{P_{0}}$  and $\mathbf{P_{1}}$.

Stability of these kinds of Markov chains were described in \cite{kovchegov2010note} and \cite{bradford2011adiabatic} using the notion of adiabatic time defined as follows:

\begin{Def} \label{def: two}
For $\epsilon > 0$  the \underline{adiabatic time}  of a time-inhomogeneous, discrete-time Markov chain governed by an adiabatic evolution between $\mathbf{P_{0}}$  and $\mathbf{P_{1}}$,  is defined as:
\begin{equation}
t_{ad}( \mathbf{P_{0}}, \mathbf{P_{1}}, \epsilon) = \inf \{ T^{*} \in \mathbb{N} : \max_{\mathbf{\nu}} \| \mathbf{\nu} \mathbf{P_{0}} \mathbf{P_{\frac{1}{T}}} \mathbf{P_{\frac{2}{T}}} \cdots \mathbf{P_{1}} - \mathbf{\pi_{1}} \|_{TV} \leq \epsilon \text{ for } T \in \mathbb{N}, T \geq T^{*} \},
\end{equation}

\noindent where $\mathbf{\nu}$  is a probability distribution.
\end{Def}

Next, we recall a result from \cite{kovchegov2010note}  that compares the adiabatic time  of a time-inhomogeneous Markov chain and the mixing time of the time-homogeneous Markov chain governed by the final transition matrix.

\begin{Thm} \label{th: one}
\

\noindent Given a time-inhomogeneous, discrete-time Markov chain governed by an adiabatic evolution between the two irreducible and aperiodic $\mathbf{P_{0}}$  and $\mathbf{P_{1}}$, for $\epsilon > 0$

\begin{equation} \label{eq: three}
t_{ad}(\mathbf{P_{0}}, \mathbf{P_{1}}, \epsilon) = O \left( \frac{t_{mix}^{2}(\mathbf{P_{1}}, \epsilon \slash 2)}{\epsilon} \right).
\end{equation}
\end{Thm}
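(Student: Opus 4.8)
The plan is to leverage the identity $\mathbf{P_{k/T}}=\mathbf{P_1}+(1-k/T)(\mathbf{P_0}-\mathbf{P_1})$, so that near the end of the adiabatic evolution the applied matrices differ from $\mathbf{P_1}$ only by a perturbation of size $O(1-k/T)$. Let $\mu_k$ denote the distribution of the chain after the first $k$ of the factors $\mathbf{P_0},\mathbf{P_{1/T}},\dots,\mathbf{P_1}$ have been applied to $\mathbf{\nu}$, and set $\sigma_k=\mu_k-\mathbf{\pi_1}$, a signed measure of total mass zero. Since $\mathbf{\pi_1}\mathbf{P_{k/T}}=\mathbf{\pi_1}+(1-k/T)(\mathbf{\pi_1}\mathbf{P_0}-\mathbf{\pi_1})$, one gets the exact recursion
\[
\sigma_{k+1}=\sigma_k\mathbf{P_{k/T}}+(1-k/T)(\mathbf{\pi_1}\mathbf{P_0}-\mathbf{\pi_1}).
\]
I will control it using the Dobrushin ergodic coefficient $\delta(\cdot)$: for any mass-zero signed measure $\sigma$ and any stochastic $\mathbf{P}$ one has $\|\sigma\mathbf{P}\|_{TV}\le\delta(\mathbf{P})\|\sigma\|_{TV}$, with $\delta(\mathbf{P})\le 1$ always and $\delta$ submultiplicative.

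Set $m=t_{mix}(\mathbf{P_1},\epsilon/2)$. Since every row of $\mathbf{P_1}^{m}$ is then within $\epsilon/2$ of $\mathbf{\pi_1}$ in total variation, the triangle inequality gives $\delta(\mathbf{P_1}^{m})\le\epsilon$. The first structural step is to split the factors into a long \emph{burn-in} block, consisting of all but the last $Bm$ of them (with $B$ a constant fixed later), and a \emph{finishing} block of the last $Bm$ factors. During the burn-in block no quantitative estimate is available — the matrices $\mathbf{P_t}$ with $t$ bounded away from $1$ may mix arbitrarily slowly and have stationary distributions far from $\mathbf{\pi_1}$ — but none is needed: total variation between probability measures never exceeds $1$, so at the end of the burn-in block $\|\sigma\|_{TV}\le 1$, uniformly in $\mathbf{\nu}$. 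All the work is in the finishing block.

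Group the last $Bm$ factors into $B$ consecutive windows of length $m$. For a window beginning at index $k$ we have $k\ge T-Bm$, hence $1-k/T\le Bm/T$, and a telescoping estimate over products of stochastic matrices gives, in the operator norm $\|\mathbf{M}\|=\max_i\sum_j|\mathbf{M}_{ij}|$,
\[
\bigl\|\mathbf{P_{k/T}}\mathbf{P_{(k+1)/T}}\cdots\mathbf{P_{(k+m-1)/T}}-\mathbf{P_1}^{m}\bigr\|\le\sum_{i=0}^{m-1}\bigl\|\mathbf{P_{(k+i)/T}}-\mathbf{P_1}\bigr\|\le 2m(1-k/T)\le 2Bm^{2}/T.
\]
Since a perturbation of this size changes $\delta$ by at most the same amount, the window product $\Pi_k$ obeys $\delta(\Pi_k)\le\epsilon+2Bm^{2}/T$, while the drift accumulated across one window is at most $\sum_{i=0}^{m-1}(1-(k+i)/T)\,\|\mathbf{\pi_1}\mathbf{P_0}-\mathbf{\pi_1}\|_{TV}\le m(1-k/T)\le Bm^{2}/T$. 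Unrolling the recursion over a window yields $\|\sigma_{k+m}\|_{TV}\le(\epsilon+2Bm^{2}/T)\,\|\sigma_k\|_{TV}+Bm^{2}/T$; iterating over the $B$ windows starting from $\|\sigma\|_{TV}\le 1$ bounds the final distance by $(\epsilon+2Bm^{2}/T)^{B}+O(Bm^{2}/T)$. Imposing $T\ge C\,m^{2}/\epsilon$ for a large absolute constant $C$ makes $2Bm^{2}/T\le\epsilon$, so the first term is at most $(2\epsilon)^{B}$ and the second is $O(\epsilon/C)$; taking $B$ a fixed constant such as $2$ and $\epsilon$ below an absolute threshold makes the total at most $\epsilon$. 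As everything was uniform in $\mathbf{\nu}$, this gives $t_{ad}(\mathbf{P_0},\mathbf{P_1},\epsilon)\le C\,t_{mix}^{2}(\mathbf{P_1},\epsilon/2)/\epsilon=O\!\left(t_{mix}^{2}(\mathbf{P_1},\epsilon/2)/\epsilon\right)$.

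The step I expect to be the main obstacle is the joint bookkeeping in the finishing block: one must simultaneously keep the contraction factor $\delta(\Pi_k)$ and the running sum of stationary-drift corrections both of order $\epsilon$ under the single time budget $T=\Theta(m^{2}/\epsilon)$. The quadratic dependence on $m$ is genuinely forced here: each of the $\Theta(1)$ windows has length $m$ with matrices lying within $\Theta(m/T)$ of $\mathbf{P_1}$, so the window product differs from $\mathbf{P_1}^{m}$ by $\Theta(m^{2}/T)$, which must be made $O(\epsilon)$. A secondary point deserving care is the claim that the burn-in phase needs only the trivial bound $\|\sigma\|_{TV}\le 1$ — that nothing sharper holds in general, and nothing sharper is required.
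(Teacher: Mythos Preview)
The paper does not actually prove Theorem~\ref{th: one}: it is quoted from \cite{kovchegov2010note}. The only trace of the original argument in the present paper is in the proof of Proposition~\ref{pr: one}, which recalls that the cited proof yields $t_{ad}\le K\,t_{mix}(\mathbf{P_1},\epsilon/2)$ whenever $K$ satisfies a condition involving $\bigl((1+\tfrac{1}{K-1})^{K-1}/e\bigr)^{t_{mix}(\mathbf{P_1},\epsilon/2)}$, and that this resolves to $K=O(t_{mix}(\mathbf{P_1},\epsilon/2)/\epsilon)$.

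Your argument is correct. The recursion $\sigma_{k+1}=\sigma_k\mathbf{P_{k/T}}+(1-k/T)(\mathbf{\pi_1}\mathbf{P_0}-\mathbf{\pi_1})$ is right, the Dobrushin bound $\delta(\mathbf{P_1}^{m})\le\epsilon$ follows from the definition of $m$, the telescoping estimate on $\|\Pi_k-\mathbf{P_1}^{m}\|$ is standard, and the contraction--plus--drift iteration over $B$ windows closes with absolute constants once $T\ge C\,m^{2}/\epsilon$ and $\epsilon$ is below a fixed threshold. The observation that the burn-in phase needs only $\|\sigma\|_{TV}\le 1$ is also correct and essential.

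Your route and the cited one share the same core idea: only the last $O(t_{mix}(\mathbf{P_1},\epsilon/2))$ factors matter, because there $\mathbf{P_{k/T}}$ is within $O(t_{mix}/T)$ of $\mathbf{P_1}$, so their product inherits the contraction of $\mathbf{P_1}^{t_{mix}}$ up to a perturbation that must be made $O(\epsilon)$, forcing $T=\Theta(t_{mix}^{2}/\epsilon)$. The difference is packaging: the original proof parametrises $T=K\,t_{mix}$ and tracks the closeness through the expression $((1+\tfrac{1}{K-1})^{K-1}/e)^{t_{mix}}$, whereas you work directly with the Dobrushin coefficient and an explicit window decomposition. Your version makes the source of the $t_{mix}^{2}$ factor (window length $m$ times per-step perturbation $O(m/T)$) more transparent, and avoids the somewhat opaque analytic condition; the original gives the explicit leading constant $2$ recorded in Proposition~\ref{pr: one}. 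These are variants of the same argument rather than genuinely different proofs.
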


We showed that this bound is tight in \cite{bradford2011adiabatic}  by finding a pair of matrices, $\mathbf{P_{0}}$  and $\mathbf{P_{1}}$, with the following property:  as $\epsilon \rightarrow 0$, there exists a positive constant $C$  such that $$t_{ad}(\mathbf{P_{0}}, \mathbf{P_{1}}, \epsilon) = \frac{C t_{mix}^{2}(\mathbf{P_{1}}, \epsilon \slash 2)}{\epsilon}.$$

In the following Proposition, we provide an upper bound on the adiabatic time using the square of the mixing time.  Although this is a minor improvement, it is necessary for our main result.  The proof of this Proposition is given in Section~\ref{sec: proofs}.

\begin{Prop} \label{pr: one}
Given a time-inhomogeneous, discrete-time Markov chain governed by an adiabatic evolution between the two irreducible and aperiodic $\mathbf{P_{0}}$  and $\mathbf{P_{1}}$, for $\epsilon > 0$
\begin{equation} \label{eq: four}
t_{ad}(\mathbf{P_{0}}, \mathbf{P_{1}}, \epsilon) \leq \frac{2 t_{mix}^{2}(\mathbf{P_{1}}, \epsilon \slash 2)}{\epsilon}.
\end{equation}
\end{Prop}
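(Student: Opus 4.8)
The plan is to sharpen the hybrid (telescoping) argument behind Theorem~\ref{th: one}, this time tracking the constants explicitly. Write $m = t_{mix}(\mathbf{P_1}, \epsilon \slash 2)$ and fix an integer $T \geq 2m^2 \slash \epsilon$ (so $T \geq m$, since $\epsilon \leq 1$ and $m \geq 1$). Given an arbitrary probability distribution $\mathbf{\nu}$, I would split the product of the $T+1$ matrices $\mathbf{P_0}, \mathbf{P_{\frac{1}{T}}}, \dots, \mathbf{P_{\frac{T}{T}}} = \mathbf{P_1}$ into an initial block $\mathbf{P_0}\mathbf{P_{\frac{1}{T}}}\cdots\mathbf{P_{\frac{T-m}{T}}}$ and a tail block $\mathbf{P_{\frac{T-m+1}{T}}}\cdots\mathbf{P_{\frac{T}{T}}}$ consisting of the last $m$ matrices, writing $\mathbf{\nu}'$ for the probability distribution obtained by applying the initial block to $\mathbf{\nu}$. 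Then $\mathbf{\nu}\mathbf{P_0}\mathbf{P_{\frac{1}{T}}}\cdots\mathbf{P_1} = \mathbf{\nu}'\mathbf{P_{\frac{T-m+1}{T}}}\cdots\mathbf{P_1}$, so by the triangle inequality it is enough to bound the two quantities $\|\mathbf{\nu}'\mathbf{P_{\frac{T-m+1}{T}}}\cdots\mathbf{P_1} - \mathbf{\nu}'\mathbf{P_1}^m\|_{TV}$ and $\|\mathbf{\nu}'\mathbf{P_1}^m - \mathbf{\pi_1}\|_{TV}$.

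The second quantity I would bound by $\epsilon \slash 2$ directly from Definition~\ref{def: one}, since $m = t_{mix}(\mathbf{P_1}, \epsilon \slash 2)$ and $\mathbf{\nu}'$ is a probability distribution. For the first, set $\mathbf{A_i} = \mathbf{P_{\frac{T-m+1+i}{T}}}$ for $0 \leq i \leq m-1$, so that $\mathbf{A_{m-1}} = \mathbf{P_1}$, and use the telescoping identity
\begin{equation} \label{eq: telescope}
\mathbf{A_0}\mathbf{A_1}\cdots\mathbf{A_{m-1}} - \mathbf{P_1}^m \; = \; \sum_{i=0}^{m-1} \mathbf{P_1}^{i} \bigl( \mathbf{A_i} - \mathbf{P_1} \bigr) \mathbf{A_{i+1}}\cdots\mathbf{A_{m-1}} .
\end{equation}
Here I would invoke two elementary facts about the row-stochastic setting: right-multiplication by a stochastic matrix is an $l^1$-contraction on signed measures; and $\mathbf{A_i} - \mathbf{P_1} = \tfrac{m-1-i}{T}(\mathbf{P_0} - \mathbf{P_1})$, whose rows are differences of two probability vectors, so right-multiplication by $\mathbf{A_i} - \mathbf{P_1}$ has $l^1 \to l^1$ operator norm at most $\tfrac{2(m-1-i)}{T}$. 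Multiplying \eqref{eq: telescope} on the left by $\mathbf{\nu}'$ and noting that each $\mathbf{\nu}'\mathbf{P_1}^{i}$ is again a probability distribution while $\mathbf{A_{i+1}}\cdots\mathbf{A_{m-1}}$ is stochastic, the $i$-th summand has $l^1$-norm at most $\tfrac{2(m-1-i)}{T}$, so $\|\mathbf{\nu}'(\mathbf{A_0}\cdots\mathbf{A_{m-1}} - \mathbf{P_1}^m)\|_1 \leq \tfrac{2}{T}\sum_{j=0}^{m-1} j = \tfrac{m(m-1)}{T}$; equivalently, the first quantity is at most $\tfrac{m(m-1)}{2T}$ in total variation.

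Combining the two bounds gives $\max_{\mathbf{\nu}} \|\mathbf{\nu}\mathbf{P_0}\mathbf{P_{\frac{1}{T}}}\cdots\mathbf{P_1} - \mathbf{\pi_1}\|_{TV} \leq \tfrac{m(m-1)}{2T} + \tfrac{\epsilon}{2}$, and this upper bound is at most $\epsilon$ for every integer $T \geq m^2 \slash \epsilon$. Hence $\lceil m^2 \slash \epsilon \rceil$ is an admissible value of $T^*$ in Definition~\ref{def: two}, and since $\lceil m^2 \slash \epsilon \rceil < m^2 \slash \epsilon + 1 \leq 2m^2 \slash \epsilon$ (using $\epsilon \leq 1 \leq m^2$), this yields $t_{ad}(\mathbf{P_0}, \mathbf{P_1}, \epsilon) \leq 2m^2 \slash \epsilon$, i.e.\ \eqref{eq: four}. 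I do not expect a genuine obstacle here: the quadratic dependence on $t_{mix}$ and the factor $1\slash\epsilon$ are already present in Theorem~\ref{th: one} (one factor of $m$ from the number of tail matrices, a second from each being $O(m\slash T)$ away from $\mathbf{P_1}$, and $1\slash\epsilon$ from setting the resulting $O(m^2 \slash T)$ error equal to $\epsilon$), so the real work is bookkeeping — choosing the split point and evaluating $\sum_{j=0}^{m-1} j = m(m-1)\slash 2$ so that the constant comes out at (indeed below) $2$, and disposing of the boundary cases $m = 1$ and the rounding of $T^*$ up to an integer.
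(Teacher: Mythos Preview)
Your argument is correct and takes a genuinely different route from the paper's. The paper does not redo the hybrid estimate from scratch; instead it invokes the proof of Theorem~\ref{th: one} from \cite{kovchegov2010note} as a black box, which furnishes $t_{ad}(\mathbf{P_0},\mathbf{P_1},\epsilon) \leq K\,t_{mix}(\mathbf{P_1},\epsilon/2)$ for any $K$ satisfying a transcendental inequality of the shape $\bigl((1+\tfrac{1}{K-1})^{K-1}/e\bigr)^{t_{mix}(\mathbf{P_1},\epsilon/2)} \geq 1-\epsilon/2$, and then spends the remainder of the proof on Taylor expansions of $\ln(1+x)$ and $e^x$ to show that $K = 2\,t_{mix}(\mathbf{P_1},\epsilon/2)/\epsilon$ is admissible. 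Your approach bypasses this entirely: you split off the last $m$ factors, compare them to $\mathbf{P_1}^m$ via the explicit telescoping identity~\eqref{eq: telescope}, and read the constant directly from $\sum_{j=0}^{m-1} j = m(m-1)/2$. This buys you a self-contained, elementary argument (no appeal to the external proof, no series manipulations) and in fact a slightly sharper threshold $m^2/\epsilon$ before the integer rounding; the paper's route has the complementary advantage of recycling machinery already built. Both proofs tacitly restrict to the interesting range $\epsilon \leq 1$, where the bound is nontrivial.
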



Definition \ref{def: two}  suggests that for $\epsilon > 0$ and any $T \geq t_{ad}(\mathbf{P_{0}}, \mathbf{P_{1}}, \epsilon)$  any probability distribution will evolve under consecutive applications of $\mathbf{P_{\frac{k}{T}}}$  to an $\epsilon$-ball around $\mathbf{\pi_{1}}$ in the space of probability distributions with respect to the total variation norm.  We desire a stronger notion of stability in this paper to match the description of the quantum adiabatic theorem mentioned in \cite{fock2004selected}.   We want to select $T$  large enough so that starting at $\mathbf{\pi_{0}}$, the distribution will evolve under consecutive applications of $\mathbf{P_{\frac{k}{T}}}$ within an $\epsilon$-corridor of $\mathbf{\pi_{\frac{k}{T}}}$  for $1 \leq k \leq T$.  This leads us to the following definition.

\begin{Def} \label{def: four}
For $\epsilon > 0$ the \underline{stable adiabatic time}  of a time-inhomogeneous, discrete-time Markov chain governed by an adiabatic evolution between the irreducible and aperiodic $\mathbf{P_{0}}$  and $\mathbf{P_{1}}$, written as $t_{sad}(\mathbf{P_{0}}, \mathbf{P_{1}}, \epsilon)$, is defined as follows:
\begin{equation} \label{eq: six}
t_{sad}(\mathbf{P_{0}}, \mathbf{P_{1}}, \epsilon) = \inf \{ T \in \mathbb{N} : \| \mathbf{\pi_{0}} \mathbf{P_{\frac{1}{T}}} \cdots \mathbf{P_{\frac{k}{T}}} - \mathbf{\pi_{\frac{k}{T}}} \|_{TV} < \epsilon \text{ for } 1 \leq k \leq T \}.
\end{equation}
\end{Def}

The main goal of this paper is finding a bound for the stable adiabatic time with respect to the maximum mixing time over all the transition probability matrices.  For $\epsilon > 0$ we let $$t_{mix}(\epsilon) = \sup_{s \in [0,1]}  \{ t_{mix}(\mathbf{P_{s}}, \epsilon) \}$$

\noindent and we seek our bound in terms of this $t_{mix}(\epsilon)$. 
 
 We divide our result into two main theorems to highlight the nature of this bound.  Our first theorem gives us insight into the nature of the stable adiabatic time. Its proof is given in Section~\ref{sec: proofs}.

\begin{Thm} \label{th: three}
Given a time-inhomogeneous, discrete-time Markov chain governed by an adiabatic evolution between the irreducible and aperiodic $\mathbf{P_{0}}$  and $\mathbf{P_{1}}$  and given $\delta \in ( 0, 1 ]$, for any $\epsilon > 0$,  $$ \| \mathbf{\pi_{0}} \mathbf{P_{\frac{1}{T}}} \cdots \mathbf{P_{\frac{k}{T}}} - \mathbf{\pi_{\frac{k}{T}}} \|_{TV} \leq \epsilon  $$

\noindent for $$ T \geq \frac{2 t_{mix}^{2}(\epsilon \slash 2)}{\epsilon \delta}, $$

\noindent and $ \delta \leq k \slash T \leq 1$.
\end{Thm}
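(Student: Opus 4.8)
The plan is to control the distance $\| \mathbf{\pi_0} \mathbf{P_{1/T}} \cdots \mathbf{P_{k/T}} - \mathbf{\pi_{k/T}} \|_{TV}$ by induction on $k$, using a one-step ``contraction plus drift'' estimate. Write $\mathbf{\mu_k} = \mathbf{\pi_0} \mathbf{P_{1/T}} \cdots \mathbf{P_{k/T}}$ and $d_k = \| \mathbf{\mu_k} - \mathbf{\pi_{k/T}} \|_{TV}$. The key observation is that for the time-homogeneous chain with matrix $\mathbf{P_{k/T}}$ and stationary distribution $\mathbf{\pi_{k/T}}$, after $t_{mix}(\epsilon/2)$ steps any distribution is within $\epsilon/2$ of $\mathbf{\pi_{k/T}}$; equivalently, applying $\mathbf{P_{k/T}}$ repeatedly contracts total-variation distance to $\mathbf{\pi_{k/T}}$ by a factor of $1/2$ (say) every $t_{mix}(\epsilon/2)$ steps, once the distance has dropped below a small threshold, and more crudely contracts at a geometric rate with modulus bounded away from $1$ on the scale of $t_{mix}(\epsilon/2)$. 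Against this contraction we must balance the ``drift'' caused by the fact that the stationary distribution itself moves: $\| \mathbf{\pi_{(k+1)/T}} - \mathbf{\pi_{k/T}} \|_{TV}$ and, more relevantly, the difference between applying $\mathbf{P_{k/T}}$ versus the ``correct'' later matrices. Since $\mathbf{P_t}$ is linear in $t$, consecutive transition matrices differ by $O(1/T)$ in any reasonable norm, and hence (using irreducibility/aperiodicity and a resolvent-type bound, exactly as in the proof of Theorem~\ref{th: one} and Proposition~\ref{pr: one}) the stationary distributions differ by $O(t_{mix}(\epsilon/2)/T)$ per step.

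Concretely, I would group the $T$ steps into blocks of length $\ell = t_{mix}(\epsilon/2)$. Over one such block, starting from $\mathbf{\mu}$ at distance $d$ from the current stationary distribution, I claim $d$ maps to roughly $\tfrac12 d + C \ell^2 / (\epsilon T)$ or similar: the factor $\tfrac12$ from mixing over the block, and the additive term from the accumulated motion of $\mathbf{\pi_{j/T}}$ across the $\ell$ steps of the block (each contributing $O(\ell/T)$, times $\ell$ steps, giving $O(\ell^2/T)$), with an extra $1/\epsilon$ absorbing the dependence of the per-step stationary-distribution shift on the mixing time itself (as already appears in Proposition~\ref{pr: one}). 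Iterating this affine recursion over the $T/\ell$ blocks, the fixed point is $d_\infty \approx 2 C \ell^2/(\epsilon T)$, and demanding $d_\infty \le \epsilon$ forces $T \gtrsim \ell^2/\epsilon^2$, i.e. $T \ge 2 t_{mix}^2(\epsilon/2)/(\epsilon \delta)$ for the stated range $k/T \ge \delta$ — the $\delta$ entering because the first $\delta T$ steps are needed to ``burn in'' from $\mathbf{\pi_0}$ and bring $d_k$ down below $\epsilon$ in the first place, and the bound degrades near $k = 0$ where $\mathbf{\mu_0} = \mathbf{\pi_0}$ may be far from $\mathbf{\pi_{k/T}}$. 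Alternatively, and probably cleaner, one can invoke Proposition~\ref{pr: one} directly: after the first $\delta T \ge t_{ad}$ steps (choosing $T$ large enough that $\delta T \ge 2 t_{mix}^2(\epsilon/2)/\epsilon$, which is exactly the hypothesis), $\mathbf{\mu_k}$ is already $\epsilon$-close to $\mathbf{\pi_{k/T}}$, because the initial segment is itself an adiabatic evolution (on a rescaled time interval) between $\mathbf{P_0}$ and $\mathbf{P_{k/T}}$.

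I expect the main obstacle to be making the one-step (or one-block) drift estimate rigorous and uniform: bounding $\| \mathbf{\pi_{(k+1)/T}} - \mathbf{\pi_{k/T}} \|_{TV}$ — or more precisely the error incurred by pretending the chain is time-homogeneous over a block — in terms of $1/T$ and $t_{mix}(\epsilon/2)$, uniformly over $s \in [0,1]$. This is where irreducibility and aperiodicity of every $\mathbf{P_s}$ must be used to guarantee a uniform spectral gap / uniform mixing bound, so that the contraction factor is bounded away from $1$ on the scale of $t_{mix}(\epsilon/2)$ and the perturbation of the stationary vector under an $O(1/T)$ perturbation of the matrix is controlled by a constant times $t_{mix}(\epsilon/2)$ (essentially $\| (\mathbf{I} - \mathbf{P_s} + \mathbf{1}\mathbf{\pi_s})^{-1} \|$ is $O(t_{mix})$). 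Since the paper has just proved Proposition~\ref{pr: one} with exactly this flavor of argument, I would lean on that machinery: apply Proposition~\ref{pr: one} to the truncated adiabatic evolution ending at step $k$, observing that $\mathbf{P_0}, \mathbf{P_{1/T}}, \ldots, \mathbf{P_{k/T}}$ is (after reparametrizing $[0, k/T]$ to $[0,1]$, which only rescales $T$ to $T' = T \cdot (k/T) = k \ge \delta T$) precisely an adiabatic evolution between $\mathbf{P_0}$ and $\mathbf{P_{k/T}}$, whose final mixing time is at most $t_{mix}(\epsilon/2)$; then $k \ge \delta T \ge 2 t_{mix}^2(\epsilon/2)/\epsilon \ge 2 t_{mix}^2(\mathbf{P_{k/T}}, \epsilon/2)/\epsilon \ge t_{ad}(\mathbf{P_0}, \mathbf{P_{k/T}}, \epsilon)$, and Definition~\ref{def: two} applied with starting distribution $\mathbf{\pi_0}$ gives $d_k \le \epsilon$. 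The only subtlety to check is that the reparametrization genuinely yields the family $\{(1-u)\mathbf{P_0} + u \mathbf{P_{k/T}}\}_{u \in [0,1]}$ at the sampled times, which follows because $(1 - \tfrac{j}{k})\mathbf{P_0} + \tfrac{j}{k}\mathbf{P_{k/T}} = (1 - \tfrac{j}{T})\mathbf{P_0} + \tfrac{j}{T}\mathbf{P_1} = \mathbf{P_{j/T}}$ for $0 \le j \le k$.
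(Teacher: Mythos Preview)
Your final approach --- reparametrize the truncated evolution $\mathbf{P_0}, \mathbf{P_{1/T}}, \ldots, \mathbf{P_{k/T}}$ as an adiabatic evolution between $\mathbf{P_0}$ and $\mathbf{P_{k/T}}$ with effective length $k$, verify the identity $(1-\tfrac{j}{k})\mathbf{P_0} + \tfrac{j}{k}\mathbf{P_{k/T}} = \mathbf{P_{j/T}}$, bound $t_{mix}(\mathbf{P_{k/T}},\epsilon/2) \le t_{mix}(\epsilon/2)$, and invoke Proposition~\ref{pr: one} to get $k \ge \delta T \ge 2t_{mix}^2(\epsilon/2)/\epsilon \ge t_{ad}(\mathbf{P_0},\mathbf{P_{k/T}},\epsilon)$ --- is exactly the paper's proof (the paper writes the rescaling as $\mathbf{P_t^{(s)}} = \mathbf{P_{st}}$ with $s = k/T$). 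The earlier contraction-plus-drift block argument you sketch is unnecessary and can be dropped; the reparametrization plus Proposition~\ref{pr: one} is the whole proof.
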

%
%

We now state our main result.  The proof is given in Section \ref{sec: result}.

\begin{Thm} \label{th: four}
Given a time-inhomogeneous, discrete-time Markov chain governed by an adiabatic evolution between two time-homogeneous, discrete-time, $n$-state, irreducible and aperiodic Markov chains with probability transition matrices $\mathbf{P_{0}}$  and $\mathbf{P_{1}}$, for any $\epsilon > 0$ ,

\begin{equation} \label{eq: seven}
t_{sad}(\mathbf{P_{0}}, \mathbf{P_{1}}, \epsilon) = O \left( \frac{t_{mix}^{4}(\epsilon \slash 2)}{\epsilon^{3}} \right).
\end{equation}

\end{Thm}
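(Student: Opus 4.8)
The plan is to interpolate between the results we already have. Theorem~\ref{th: three} tells us that once $T$ is moderately large, the evolving distribution tracks $\mathbf{\pi}_{k/T}$ closely for all times $k/T$ bounded away from $0$ by a fixed $\delta$; the only gap is the initial window $0 \le k/T < \delta$, where the bound on $T$ blows up like $1/\delta$. So the strategy is: (i) pick $\delta$ small but fixed, (ii) show that on the short initial interval $[0,\delta]$ the chain cannot drift too far from its instantaneous stationary distribution simply because not many steps have elapsed, and (iii) combine the two regimes to get a bound on $t_{sad}$ that is uniform over all $k$.

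For step (ii), the key quantitative input is a Lipschitz-type estimate: since $\mathbf{P}_t = (1-t)\mathbf{P}_0 + t\mathbf{P}_1$ depends linearly on $t$, and stationary distributions depend continuously (in fact, one can get a quantitative modulus) on the matrix entries, we have $\|\mathbf{\pi}_s - \mathbf{\pi}_{s'}\|_{TV} \le L\,|s-s'|$ for some constant $L$ depending on $\mathbf{P}_0,\mathbf{P}_1$. More usefully, $L$ itself can be controlled by the mixing time: a standard perturbation bound gives $\|\mathbf{\pi}_s - \mathbf{\pi}_{s'}\|_{TV} = O\!\left(t_{mix}(\epsilon)\,|s-s'|\right)$ type control (the stationary distribution of a chain with mixing time $t_{mix}$ cannot be too sensitive to an $O(|s-s'|)$ perturbation of the kernel). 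Then for $k/T \le \delta$, writing a telescoping sum
\[
\mathbf{\pi}_0 \mathbf{P}_{\frac1T}\cdots \mathbf{P}_{\frac kT} - \mathbf{\pi}_{\frac kT}
= \Big(\mathbf{\pi}_0 \mathbf{P}_{\frac1T}\cdots \mathbf{P}_{\frac kT} - \mathbf{\pi}_0\Big) + \Big(\mathbf{\pi}_0 - \mathbf{\pi}_{\frac kT}\Big),
\]
the second term is $O(L\delta)$, and the first term is bounded because each factor $\mathbf{P}_{j/T}$ is close to $\mathbf{P}_0$ (distance $O(j/T) \le O(\delta)$) so $k$ applications move $\mathbf{\pi}_0$ by at most $O(k \cdot \delta) \le O(\delta^2 T)$ — this last estimate is too crude, so instead one uses that $\mathbf{\pi}_0$ is stationary for $\mathbf{P}_0$ together with a one-step error of size $O(\delta/T)$ per step, accumulating to $O(\delta)$ over the at-most-$\delta T$ steps. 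Choosing $\delta \asymp \epsilon / t_{mix}(\epsilon/2)$ makes both contributions $O(\epsilon)$.

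Plugging this choice of $\delta$ back into the bound $T \ge 2 t_{mix}^2(\epsilon/2)/(\epsilon\delta)$ from Theorem~\ref{th: three} gives
\[
T = O\!\left( \frac{t_{mix}^2(\epsilon/2)}{\epsilon \cdot \big(\epsilon / t_{mix}(\epsilon/2)\big)} \right)
 = O\!\left( \frac{t_{mix}^3(\epsilon/2)}{\epsilon^2} \right),
\]
which is actually better than claimed; to land exactly on the stated $O\!\left(t_{mix}^4(\epsilon/2)/\epsilon^3\right)$ one absorbs an extra factor of $t_{mix}(\epsilon/2)/\epsilon$ that arises from being less aggressive with constants in the initial-window estimate (for instance if the perturbation bound for $\mathbf{\pi}_s$ is only available with an extra $t_{mix}/\epsilon$ factor, or if one needs $\delta \asymp \epsilon^2 / t_{mix}^2$ for a cleaner telescoping argument). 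Either way the mechanism is the same: trade off $\delta$ between the two regimes.

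The main obstacle I expect is step (ii) — getting a clean, quantitative bound on how far $\mathbf{\pi}_0\mathbf{P}_{1/T}\cdots\mathbf{P}_{k/T}$ can wander from $\mathbf{\pi}_{k/T}$ during the short initial phase, with the dependence on $t_{mix}$ and $\epsilon$ made explicit. This requires (a) a perturbation bound relating $\|\mathbf{\pi}_s - \mathbf{\pi}_{s'}\|_{TV}$ to $|s-s'|$ with constants expressed through $t_{mix}$, and (b) carefully summing the per-step drift $\|\mathbf{\nu}\mathbf{P}_{j/T} - \mathbf{\nu}\mathbf{P}_0\|_{TV} = O(j/T)$ across the $\delta T$ steps so the total stays $O(\delta)$ rather than $O(\delta^2 T)$ — the point being that the errors are measured against the fixed reference $\mathbf{\pi}_0$ (which is genuinely stationary for $\mathbf{P}_0$), so they accumulate additively in the perturbation size, not multiplicatively in the number of steps. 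Once that estimate is in hand, the optimization over $\delta$ and the invocation of Theorem~\ref{th: three} are routine.
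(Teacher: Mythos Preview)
Your high-level strategy is exactly the paper's: split at a threshold $\delta$, invoke Theorem~\ref{th: three} for $k/T \ge \delta$, and for the initial window $k/T < \delta$ combine a Lipschitz bound on $s \mapsto \mathbf{\pi_s}$ with an estimate of how far $\mathbf{\pi_0}\mathbf{P_{1/T}}\cdots\mathbf{P_{k/T}}$ drifts from $\mathbf{\pi_0}$. These two ingredients are precisely the paper's Corollary~\ref{cor: one} and Proposition~\ref{pr: three}.

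The gap is in your drift estimate. With $\mathbf{\mu_k} = \mathbf{\pi_0}\mathbf{P_{1/T}}\cdots\mathbf{P_{k/T}}$, you decompose $\mathbf{\mu_k} - \mathbf{\pi_{k/T}} = (\mathbf{\mu_k} - \mathbf{\pi_0}) + (\mathbf{\pi_0} - \mathbf{\pi_{k/T}})$ and assert that the first piece is $O(\delta)$ via a ``one-step error of size $O(\delta/T)$.'' But the one-step increment at stage $j$, measured against the fixed reference $\mathbf{\pi_0}$, is $\|\mathbf{\pi_0}\mathbf{P_{j/T}} - \mathbf{\pi_0}\|_{TV} = (j/T)\|\mathbf{\pi_0}(\mathbf{P_1}-\mathbf{P_0})\|_{TV} \le j/T$, which is $O(\delta)$, not $O(\delta/T)$. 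Summed over $j \le k \le \delta T$ this yields $\sum_{j=1}^k j/T = k(k+1)/(2T) = O(\delta^2 T)$, and that is exactly the $(k+1)^2/(2T)$ term in Proposition~\ref{pr: three}; your ``crude'' estimate is in fact the honest bound from this decomposition, and no $O(\delta)$ refinement is available here. Consequently the constraint on the initial window is $\delta^2 T \le \epsilon$, which together with $T \ge 2t_{mix}^2(\epsilon/2)/(\epsilon\delta)$ from Theorem~\ref{th: three} forces $\delta \approx \sqrt{\epsilon/T} \approx \epsilon^2/t_{mix}^2(\epsilon/2)$ and hence $T = O\big(t_{mix}^4(\epsilon/2)/\epsilon^3\big)$. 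Your hedged fallback choice $\delta \asymp \epsilon^2/t_{mix}^2$ therefore turns out to be the right one, but for this reason rather than the ones you offer; the $O(t_{mix}^3/\epsilon^2)$ bound does not follow from the argument you wrote.
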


The remaining sections of the paper are organized as follows:  in Section~\ref{sec: TT}  we state the necessary tools for the proof of our main theorem, in Section~\ref{sec: result}  we prove our main theorem, and Section~\ref{sec: proofs}  is dedicated to proofs. 

\section{\sc {Preliminaries}}
\label{sec: TT}
We begin this section with a result on the stability of time-homogeneous Markov chains.  We will find a lower bound for the mixing time of a time-homogeneous, discrete-time, irreducible and aperiodic Markov chain governed by the probability transition matrix $\mathbf{P}$  in terms of the inverse of the smallest nonzero singular value of $\mathbb{I} - \mathbf{P}$.  There are similar results in \cite{levin2009markov}, where the lower bound for the mixing time of a time-homogeneous, discrete-time, irreducible, aperiodic and reversible Markov chain governed by the probability transition matrix $\mathbf{P}$  is found in terms of the inverse smallest  nonzero eigenvalue of $\mathbb{I} - \mathbf{P}$, or rather in terms of the relaxation time for $\mathbf{P}$.  One should note that our work is not limited to reversible Markov chains.  Our work applies to a much larger class of Markov chains.  The proof of the following Proposition is in Section~\ref{sec: proofs}.

\begin{Prop} \label{pr: two}
\noindent For a time-homogeneous, discrete-time, $n$-state, irreducible and aperiodic Markov chain, if we are given $\epsilon > 0$, then if $\sigma$ is the smallest nonzero singular value of $\mathbb{I} - \mathbf{P}$, 
\begin{equation} \label{eq: eight}
\frac{1 - 2 \sqrt{n} \epsilon}{\sigma} \leq t_{mix}(\mathbf{P}, \epsilon).
\end{equation} 
\end{Prop}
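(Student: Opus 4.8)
The plan is to evaluate the operator $\mathbb{I}-\mathbf{P}^{T}$ at $T=t_{mix}(\mathbf{P},\epsilon)$ and bound it in two competing ways. On one side, the elementary identity
\[
\mathbb{I}-\mathbf{P}^{T}=(\mathbb{I}-\mathbf{P})\sum_{j=0}^{T-1}\mathbf{P}^{j}=\Big(\sum_{j=0}^{T-1}\mathbf{P}^{j}\Big)(\mathbb{I}-\mathbf{P})
\]
exhibits $\mathbb{I}-\mathbf{P}^{T}$ as a product one of whose factors is $\mathbb{I}-\mathbf{P}$, and $\sigma$ is precisely the least amount by which $\mathbb{I}-\mathbf{P}$ can contract a vector lying outside its null space; since each $\mathbf{P}^{j}$ is stochastic, the partial-sum factor is norm-controlled by $T$, so this side is ``small'' for small $T$. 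On the other side, ergodicity pushes $\mathbf{P}^{T}$ toward the rank-one matrix $\mathbf{\Pi}$ whose rows all equal $\mathbf{\pi}$, so $\mathbb{I}-\mathbf{P}^{T}$ is close to $\mathbb{I}-\mathbf{\Pi}$, which acts as the identity on the complement of $\operatorname{span}\{\mathbf{\pi}\}$ and is therefore ``large''. Forcing these two descriptions to be consistent yields a lower bound on $T$.

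Carrying this out, I would first fix a unit right singular vector $\mathbf{v}$ of $\mathbb{I}-\mathbf{P}$ for $\sigma$. By irreducibility the fixed space of $\mathbf{P}$ is the line through the all-ones vector $\mathbf{1}$, so the null space of $\mathbb{I}-\mathbf{P}$ is $\operatorname{span}\{\mathbf{1}\}$, and hence $\mathbf{v}\perp\mathbf{1}$ (its coordinates sum to zero) with $\|(\mathbb{I}-\mathbf{P})\mathbf{v}\|_{2}=\sigma$. Because a stochastic matrix never increases the $\ell^{\infty}$-norm of a vector, the factorization $\mathbb{I}-\mathbf{P}^{T}=\big(\sum_{j=0}^{T-1}\mathbf{P}^{j}\big)(\mathbb{I}-\mathbf{P})$ gives
\[
\|(\mathbb{I}-\mathbf{P}^{T})\mathbf{v}\|_{\infty}\le\sum_{j=0}^{T-1}\|\mathbf{P}^{j}(\mathbb{I}-\mathbf{P})\mathbf{v}\|_{\infty}\le T\,\|(\mathbb{I}-\mathbf{P})\mathbf{v}\|_{\infty}\le T\,\|(\mathbb{I}-\mathbf{P})\mathbf{v}\|_{2}=T\sigma .
\]
For the matching lower bound I would use that $T=t_{mix}(\mathbf{P},\epsilon)$ makes every row $\mathbf{e}_{i}\mathbf{P}^{T}$ lie within total variation distance $\epsilon$ of $\mathbf{\pi}$, so that
\[
(\mathbf{P}^{T}\mathbf{v})_{i}=\langle\mathbf{e}_{i}\mathbf{P}^{T},\mathbf{v}\rangle=\langle\mathbf{\pi},\mathbf{v}\rangle+r_{i},\qquad|r_{i}|\le 2\epsilon\|\mathbf{v}\|_{\infty}\le 2\epsilon .
\]
Thus $(\mathbb{I}-\mathbf{P}^{T})\mathbf{v}=\mathbf{v}-\langle\mathbf{\pi},\mathbf{v}\rangle\mathbf{1}-\mathbf{r}$, and since $\mathbf{v}$ has zero coordinate sum the vector $\mathbf{v}-\langle\mathbf{\pi},\mathbf{v}\rangle\mathbf{1}$ has squared Euclidean norm $\|\mathbf{v}\|_{2}^{2}+n\langle\mathbf{\pi},\mathbf{v}\rangle^{2}\ge 1$; discarding the small perturbation $\mathbf{r}$ (whose $\ell^{\infty}$-norm is at most $2\epsilon$, at the cost of a factor $\sqrt{n}$ when this bound is converted to the relevant norm) leaves a lower bound of the form $1-2\sqrt{n}\,\epsilon$. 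Combining the two estimates gives $\sigma\,t_{mix}(\mathbf{P},\epsilon)\ge 1-2\sqrt{n}\,\epsilon$, which is the assertion (and is vacuously true once $2\sqrt{n}\,\epsilon\ge 1$).

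The one place that needs genuine care is the reconciliation of norms: the factor $\mathbb{I}-\mathbf{P}$ is naturally measured in the Euclidean norm (that is the meaning of $\sigma$), whereas the partial sum $\sum_{j<T}\mathbf{P}^{j}$ and the deviation matrix $\mathbf{P}^{T}-\mathbf{\Pi}$ are naturally measured through $\ell^{\infty}$ and through total variation, and moving between these norms is exactly what produces the dimensional constant $\sqrt{n}$ in the statement; bookkeeping of which inequality is used where is the bulk of the work. I would also emphasize that the argument relies only on the factorization identity, the contractivity of stochastic matrices, and ergodicity (which guarantees $t_{mix}(\mathbf{P},\epsilon)<\infty$); crucially it uses a singular vector of $\mathbb{I}-\mathbf{P}$ rather than an eigenvector, so no reversibility of $\mathbf{P}$ is required, in contrast to the analogous relaxation-time lower bounds in \cite{levin2009markov}.
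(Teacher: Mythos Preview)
Your overall strategy---factorize $\mathbb{I}-\mathbf{P}^{T}=(\sum_{j<T}\mathbf{P}^{j})(\mathbb{I}-\mathbf{P})$, evaluate on a singular vector for $\sigma$, and bound the result above via contractivity and below via mixing---is exactly the paper's. The substantive difference is that you work with a \emph{right} singular vector and the column action of $\mathbf{P}$, whereas the paper uses the \emph{left} singular vector $\mathbf{v_{n-1}}$ (orthogonal to $\pi$, not to $\mathbf{1}$) and the row action.

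That choice creates a genuine gap in your argument. Your upper bound
\[
\|(\mathbb{I}-\mathbf{P}^{T})\mathbf{v}\|_{\infty}\le T\sigma
\]
is in $\ell^{\infty}$ (correctly exploiting that row-stochastic matrices contract $\ell^{\infty}$ on column vectors), while your lower bound
\[
\|(\mathbb{I}-\mathbf{P}^{T})\mathbf{v}\|_{2}\ge \|\mathbf{v}-\langle\pi,\mathbf{v}\rangle\mathbf{1}\|_{2}-\|\mathbf{r}\|_{2}\ge 1-2\sqrt{n}\,\epsilon
\]
is in $\ell^{2}$ (needed because the orthogonality $\mathbf{v}\perp\mathbf{1}$ is an $\ell^{2}$ statement). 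But $\|\cdot\|_{\infty}\le\|\cdot\|_{2}$, so the inequalities point the wrong way to be chained directly; the honest combination is
\[
1-2\sqrt{n}\,\epsilon \le \|(\mathbb{I}-\mathbf{P}^{T})\mathbf{v}\|_{2}\le \sqrt{n}\,\|(\mathbb{I}-\mathbf{P}^{T})\mathbf{v}\|_{\infty}\le \sqrt{n}\,T\sigma,
\]
which yields only $t_{mix}(\mathbf{P},\epsilon)\ge (1-2\sqrt{n}\,\epsilon)/(\sqrt{n}\,\sigma)$, a factor $\sqrt{n}$ short of the proposition. Trying instead to lower-bound $\|(\mathbb{I}-\mathbf{P}^{T})\mathbf{v}\|_{\infty}$ directly does not help, since for a unit $\ell^{2}$ vector $\mathbf{v}\perp\mathbf{1}$ one only has $\|\mathbf{v}-c\mathbf{1}\|_{\infty}\ge 1/(2\sqrt{n})$ in general. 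You flag norm reconciliation as ``the one place that needs genuine care,'' but as written the bookkeeping does not close.

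The paper sidesteps this by keeping both bounds in $\ell^{2}$ for the \emph{left} singular vector: the lower bound uses $\mathbf{v_{n-1}}\perp\pi$ (so $\|\mathbf{v_{n-1}}+a\pi\|_{2}\ge 1$), and the upper bound is obtained from $\|\mathbf{v_{n-1}}(\mathbb{I}-\mathbf{P}^{t})\|_{2}\le\|\mathbf{v_{n-1}}(\mathbb{I}-\mathbf{P})\|_{2}\,\|\mathbf{M_{t-1}}\|_{*}$ with $\mathbf{M_{t-1}}=\sum_{j<t}\mathbf{P}^{j}$. If you want to repair your argument while keeping the right-singular-vector viewpoint, you would need an $\ell^{2}$ upper bound on $\|(\sum_{j<T}\mathbf{P}^{j})\mathbf{w}\|_{2}$ for $\mathbf{w}=(\mathbb{I}-\mathbf{P})\mathbf{v}$ that does not lose $\sqrt{n}$; the $\ell^{\infty}$ contractivity you invoke is not enough, and $\mathbf{P}$ acting on column vectors is \emph{not} an $\ell^{2}$ contraction in general. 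The cleanest fix is to mirror the paper and switch to the left singular vector, where the natural orthogonality (to $\pi$) and the natural contractivity (row action on signed measures) live in compatible norms.
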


This Proposition will be vital for proving Theorem \ref{th: four} and it gives us some intuition about the mixing time.  There have been many results bounding the relaxation time for reversible Markov chains on weighted graphs for example conductance bounds and weighted path upper bounds.  In both \cite{aldous2002reversible}  and \cite{bremaud2010markov}  the authors introduce the necessary spectral structure to find these bounds.  They also define a Dirichlet form to help derive the well-known Rayleigh Theorem and the Perron-Frobenius Theorem, which also describe bounds on the relaxation time.  Our work, however, does not employ these techniques directly. 

We now find a bound of $\| \mathbf{\pi_{0}} \mathbf{P_{\frac{1}{T}}} \cdots \mathbf{P_{\frac{k}{T}}} - \mathbf{\pi_{\frac{k}{T}}} \|_{TV}$  in terms of $\| \mathbf{\pi_{0}} - \mathbf{\pi_{\frac{k}{T}}}  \|_{TV}$.  We devote the following Proposition to this endeavor and its proof is in Section~\ref{sec: proofs}.

\begin{Prop} \label{pr: three}
For $1 \leq k \leq T$  

\begin{equation} \label{eq: nine}
\| \mathbf{\pi_{0}} \mathbf{P_{\frac{1}{T}}} \cdots \mathbf{P_{\frac{k}{T}}} - \mathbf{\pi_{\frac{k}{T}}} \|_{TV} \leq \| \mathbf{\pi_{\frac{k}{T}}} - \mathbf{\pi_{0}} \|_{TV} + \frac{(k+1)^{2}}{2 T}.
\end{equation}
\end{Prop}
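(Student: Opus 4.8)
The plan is to route the comparison through $\mathbf{\pi_{0}}$. Since $\mathbf{\pi_{0}}$ is the stationary distribution of $\mathbf{P_{0}}$ we have $\mathbf{\pi_{0}}\mathbf{P_{0}}^{k} = \mathbf{\pi_{0}}$, so the triangle inequality for $\|\cdot\|_{TV}$ gives
\begin{equation*}
\| \mathbf{\pi_{0}} \mathbf{P_{\frac{1}{T}}} \cdots \mathbf{P_{\frac{k}{T}}} - \mathbf{\pi_{\frac{k}{T}}} \|_{TV} \leq \| \mathbf{\pi_{0}} \mathbf{P_{\frac{1}{T}}} \cdots \mathbf{P_{\frac{k}{T}}} - \mathbf{\pi_{0}}\mathbf{P_{0}}^{k} \|_{TV} + \| \mathbf{\pi_{0}} - \mathbf{\pi_{\frac{k}{T}}} \|_{TV}.
\end{equation*}
Hence it suffices to bound the first term on the right by $\frac{(k+1)^{2}}{2T}$, i.e. to show that the evolved distribution stays close to $\mathbf{\pi_{0}}$ itself.

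For that I would interpolate between the product $\mathbf{P_{\frac{1}{T}}} \cdots \mathbf{P_{\frac{k}{T}}}$ and $\mathbf{P_{0}}^{k}$ one factor at a time. Write $\mathbf{D} = \mathbf{P_{1}} - \mathbf{P_{0}}$, so that $\mathbf{P_{\frac{j}{T}}} = \mathbf{P_{0}} + \frac{j}{T}\mathbf{D}$, and for $0 \leq j \leq k$ set the hybrid row vector
\begin{equation*}
\mathbf{\nu_{j}} = \mathbf{\pi_{0}} \mathbf{P_{\frac{1}{T}}} \cdots \mathbf{P_{\frac{j}{T}}} \, \mathbf{P_{0}}^{k-j},
\end{equation*}
with the convention that an empty product is the identity, so that $\mathbf{\nu_{0}} = \mathbf{\pi_{0}}\mathbf{P_{0}}^{k} = \mathbf{\pi_{0}}$ and $\mathbf{\nu_{k}} = \mathbf{\pi_{0}} \mathbf{P_{\frac{1}{T}}} \cdots \mathbf{P_{\frac{k}{T}}}$. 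Peeling one factor and using $\mathbf{P_{\frac{j}{T}}} - \mathbf{P_{0}} = \frac{j}{T}\mathbf{D}$ gives
\begin{equation*}
\mathbf{\nu_{j}} - \mathbf{\nu_{j-1}} = \frac{j}{T}\, \mathbf{w_{j-1}}\, \mathbf{D}\, \mathbf{P_{0}}^{k-j}, \qquad \text{where } \ \mathbf{w_{j-1}} = \mathbf{\pi_{0}} \mathbf{P_{\frac{1}{T}}} \cdots \mathbf{P_{\frac{j-1}{T}}}.
\end{equation*}
The vector $\mathbf{w_{j-1}}$ is a probability distribution, so $\mathbf{w_{j-1}}\mathbf{D} = \mathbf{w_{j-1}}\mathbf{P_{1}} - \mathbf{w_{j-1}}\mathbf{P_{0}}$ is a difference of two probability distributions and therefore $\| \mathbf{w_{j-1}}\mathbf{D} \|_{TV} \leq 1$; since $\mathbf{P_{0}}^{k-j}$ is stochastic and multiplication on the right by a stochastic matrix does not increase $\|\cdot\|_{TV}$, we get $\| \mathbf{\nu_{j}} - \mathbf{\nu_{j-1}} \|_{TV} \leq \frac{j}{T}$.

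Summing the telescoping identity $\mathbf{\nu_{k}} - \mathbf{\nu_{0}} = \sum_{j=1}^{k} (\mathbf{\nu_{j}} - \mathbf{\nu_{j-1}})$ then yields
\begin{equation*}
\| \mathbf{\pi_{0}} \mathbf{P_{\frac{1}{T}}} \cdots \mathbf{P_{\frac{k}{T}}} - \mathbf{\pi_{0}} \|_{TV} = \| \mathbf{\nu_{k}} - \mathbf{\nu_{0}} \|_{TV} \leq \sum_{j=1}^{k} \frac{j}{T} = \frac{k(k+1)}{2T} \leq \frac{(k+1)^{2}}{2T},
\end{equation*}
and combining this with the first display proves the Proposition (in fact with the slightly sharper constant $k(k+1)$ in place of $(k+1)^{2}$). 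I expect the only delicate point to be the interpolation bookkeeping: one must pad each hybrid vector with exactly the right power of $\mathbf{P_{0}}$ so that every intermediate row vector — in particular each $\mathbf{w_{j-1}}$ — is genuinely a probability distribution, which is precisely what makes the one-step estimate $\| \mathbf{w_{j-1}}\mathbf{D} \|_{TV} \leq 1$ and the contractivity of stochastic matrices applicable. Past that, no analytic difficulty remains.
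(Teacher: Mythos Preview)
Your argument is correct and follows essentially the same telescoping idea as the paper: write the product as a sum of one-step corrections of size $j/T$, bound each correction by $1$ in total variation using that a probability row times $\mathbf{P_{1}}-\mathbf{P_{0}}$ is a difference of distributions, and sum to $k(k+1)/(2T)\le (k+1)^2/(2T)$. The only cosmetic difference is the direction of the telescope: the paper peels factors from the left using $\mathbf{\pi_{0}}\mathbf{P_{j/T}}=\mathbf{\pi_{0}}+\tfrac{j}{T}\mathbf{\pi_{0}}(\mathbf{P_{1}}-\mathbf{P_{0}})$, obtaining correction terms $\tfrac{j}{T}\,\mathbf{\pi_{0}}(\mathbf{P_{1}}-\mathbf{P_{0}})\mathbf{P_{\frac{j+1}{T}}}\cdots\mathbf{P_{\frac{k}{T}}}$, whereas you interpolate with hybrids $\mathbf{\nu_{j}}=\mathbf{\pi_{0}}\mathbf{P_{\frac{1}{T}}}\cdots\mathbf{P_{\frac{j}{T}}}\mathbf{P_{0}}^{k-j}$ and get $\tfrac{j}{T}\,\mathbf{w_{j-1}}(\mathbf{P_{1}}-\mathbf{P_{0}})\mathbf{P_{0}}^{k-j}$; both lead to the identical estimate.
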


Now we can use the continuity of $\mathbf{\pi_{s}}$  at $s=0$  to find an appropriate bound for $\| \mathbf{\pi_{0}} \mathbf{P_{\frac{1}{T}}} \cdots \mathbf{P_{\frac{k}{T}}} - \pi_{\frac{k}{T}} \|_{TV} $  for $0 \leq k \slash T \leq \delta$.  We devote the following Proposition to the discovery of how $\mathbf{\pi_{s}}$  is continuous at $s = 0$.  The spectral structure of $\mathbf{P_{0}}$  is crucial to this development.  The proof is in Section~\ref{sec: proofs}.

\begin{Prop} \label{pr: four}
$\mathbf{\pi_{s}}$  is continuous with respect to the total variation norm at $s=0$.  In particular, for $\epsilon > 0$  if we let $\sigma$  be the smallest nonzero singular value of $\mathbb{I} - \mathbf{P_{0}}$,  then if 
\begin{equation} \label{eq: ten}
\delta = \frac{ \epsilon \sigma}{2 n^{3 \slash 2}}
\end{equation}

\noindent we have for all $s \leq \delta$,  $\| \mathbf{\pi_{s}} - \mathbf{\pi_{0}} \|_{TV} \leq \epsilon$.
\end{Prop}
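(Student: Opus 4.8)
The plan is to convert the two stationarity identities $\mathbf{\pi_s}\mathbf{P_s} = \mathbf{\pi_s}$ and $\mathbf{\pi_0}\mathbf{P_0} = \mathbf{\pi_0}$ into a single linear equation governed by $\mathbb{I} - \mathbf{P_0}$, and then ``invert'' that equation using the smallest nonzero singular value $\sigma$. Writing $\mathbf{P_s} = \mathbf{P_0} + s(\mathbf{P_1} - \mathbf{P_0})$, stationarity of $\mathbf{\pi_s}$ gives $\mathbf{\pi_s}(\mathbb{I} - \mathbf{P_0}) = s\,\mathbf{\pi_s}(\mathbf{P_1} - \mathbf{P_0})$, and subtracting $\mathbf{\pi_0}(\mathbb{I} - \mathbf{P_0}) = 0$ yields
\begin{equation*}
(\mathbf{\pi_s} - \mathbf{\pi_0})(\mathbb{I} - \mathbf{P_0}) = s\,\mathbf{\pi_s}(\mathbf{P_1} - \mathbf{P_0}).
\end{equation*}
The right-hand side is $s$ times the difference of the two probability vectors $\mathbf{\pi_s}\mathbf{P_1}$ and $\mathbf{\pi_s}\mathbf{P_0}$, so its $l^1$-norm --- hence its Euclidean norm --- is at most $2s$. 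It therefore remains to bound $\|\mathbf{\pi_s} - \mathbf{\pi_0}\|_2$ by a constant depending only on $n$ and $\sigma$ times the Euclidean norm of the left-hand side, and then pass to total variation via $\|\cdot\|_{TV} = \tfrac12\|\cdot\|_1 \le \tfrac{\sqrt n}{2}\|\cdot\|_2$.

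For the inversion, put $A := \mathbb{I} - \mathbf{P_0}$ and $v := \mathbf{\pi_s} - \mathbf{\pi_0}$, viewed as a row vector. Since $\mathbf{P_0}$ is irreducible and aperiodic, $1$ is a simple eigenvalue of $\mathbf{P_0}$, so the left null space of $A$ is exactly $\mathrm{span}(\mathbf{\pi_0})$, $A$ has rank $n-1$, and $\|xA\|_2 \ge \sigma\|x\|_2$ for every row vector $x$ orthogonal to $\mathbf{\pi_0}$ (the nonzero singular values of $A^{\top}$ coincide with those of $A$). Decompose $v = p + q$ with $p \in \mathrm{span}(\mathbf{\pi_0})$ and $q \perp \mathbf{\pi_0}$; then $vA = qA$, so $\|q\|_2 \le \sigma^{-1}\|vA\|_2 \le 2s/\sigma$. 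To control $p$, I use the remaining piece of information about $v$, namely that it is a difference of probability vectors and hence $\langle v, \mathbf 1\rangle = 0$; writing $p = c\,\mathbf{\pi_0}$ and pairing $v = p + q$ with $\mathbf 1$ gives $c = -\langle q, \mathbf 1\rangle$ (because $\langle \mathbf{\pi_0}, \mathbf 1\rangle = 1$), so $|c| \le \sqrt n\,\|q\|_2$ by Cauchy--Schwarz and $\|p\|_2 = |c|\,\|\mathbf{\pi_0}\|_2 \le \sqrt n\,\|q\|_2$ since $\|\mathbf{\pi_0}\|_2 \le \|\mathbf{\pi_0}\|_1 = 1$. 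Orthogonality of $p$ and $q$ then gives $\|v\|_2 = (\|p\|_2^2 + \|q\|_2^2)^{1/2} \le \sqrt{n+1}\,\|q\|_2 \le 2\sqrt{n+1}\,s/\sigma$, whence $\|\mathbf{\pi_s} - \mathbf{\pi_0}\|_{TV} \le \tfrac{\sqrt n}{2}\|v\|_2 \le \tfrac{\sqrt{n(n+1)}}{\sigma}\,s \le \tfrac{2n^{3/2}}{\sigma}\,s$. Taking $s \le \delta = \epsilon\sigma/(2n^{3/2})$ makes this at most $\epsilon$, and since the bound tends to $0$ with $s$, continuity at $s = 0$ follows.

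I expect the main obstacle to be exactly the non-invertibility of $\mathbb{I} - \mathbf{P_0}$: the singular-value estimate naturally controls $A^{\top}$ only on the orthogonal complement of its kernel $\mathrm{span}(\mathbf{\pi_0}^{\top})$, while the only constraint we have on $v = \mathbf{\pi_s} - \mathbf{\pi_0}$ a priori is orthogonality to the all-ones vector $\mathbf 1$. Reconciling these two subspaces --- estimating the $\mathbf{\pi_0}$-component of $v$ from the $\mathbf 1$-constraint --- is the delicate step, and it is what forces the extra factor $\sqrt n$, and hence the $n^{3/2}$ in $\delta$. A minor point to address along the way is that $\mathbf{\pi_s}$ is well defined for $s$ near $0$: each $\mathbf{P_s}$ with $s \in [0,1)$ is irreducible (its support contains that of $\mathbf{P_0}$) and so has a unique stationary distribution.
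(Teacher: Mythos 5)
Your proof is correct and arrives at exactly the paper's bound $\| \mathbf{\pi_{s}} - \mathbf{\pi_{0}} \|_{TV} \leq 2 n^{3/2} s / \sigma$, so the stated $\delta$ works. It shares the paper's essential ingredients --- the identity $(\mathbf{\pi_{s}} - \mathbf{\pi_{0}})(\mathbb{I} - \mathbf{P_{0}}) = s\,\mathbf{\pi_{s}}(\mathbf{P_{1}} - \mathbf{P_{0}})$, the lower bound $\|q(\mathbb{I}-\mathbf{P_{0}})\|_{2} \geq \sigma \|q\|_{2}$ on the orthogonal complement of the left kernel $\mathrm{span}(\mathbf{\pi_{0}})$, and the constraint $\langle \mathbf{\pi_{s}} - \mathbf{\pi_{0}}, \mathbf{1}\rangle = 0$ to control the component along $\mathbf{\pi_{0}}$ --- but the execution is genuinely different. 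The paper normalizes $\mathbf{w}(s) = (\mathbf{\pi_{s}} - \mathbf{\pi_{0}})/\|\mathbf{\pi_{s}} - \mathbf{\pi_{0}}\|_{2}$, bounds $1 - \langle \mathbf{w}(s), \mathbf{v_{n}}\rangle^{2}$ from below via the factorization $\tfrac{1}{4}\|\mathbf{w}-\mathbf{v_{n}}\|_{2}^{2}\,\|\mathbf{w}+\mathbf{v_{n}}\|_{2}^{2}$ together with lower bounds on $\|\mathbf{w}\pm\mathbf{v_{n}}\|_{2}$ obtained by projecting onto $\mathbf{1}$; you instead split $v = p + q$ orthogonally, apply the singular-value inequality directly to $q$, and solve for the coefficient of $p$ from the $\mathbf{1}$-constraint. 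Your version is somewhat cleaner: it never divides by $\|\mathbf{\pi_{s}} - \mathbf{\pi_{0}}\|_{2}$ (the paper must implicitly set aside the trivial case $\mathbf{\pi_{s}} = \mathbf{\pi_{0}}$), it isolates exactly where the factor $n^{3/2}$ enters (one $\sqrt{n}$ from comparing the $\mathbf{\pi_{0}}$-component to $q$, one $\sqrt{n}$ from passing from $\ell^{2}$ to total variation), and your remark that $\mathbf{P_{s}}$ remains irreducible and aperiodic for $s \in [0,1)$, so that $\mathbf{\pi_{s}}$ is well defined, is a detail the paper leaves implicit.
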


Now we can use Proposition \ref{pr: two}  along with the fact that $t_{mix}(\mathbf{P_{0}}, \epsilon) \leq t_{mix}(\epsilon)$  to derive the following Corollary to Proposition~\ref{pr: four}.

\begin{Cor} \label{cor: one}
$\mathbf{\pi_{s}}$  is continuous with respect to the total variation norm at $s=0$.  In particular, for $0 <\epsilon < 1 \slash \sqrt{n}$  if 
\begin{equation} \label{eq: eleven}
\delta = \frac{ \epsilon (1- \sqrt{n} \epsilon)}{4 n^{3 \slash 2} t_{mix}(\epsilon \slash 2)}
\end{equation}

\noindent we have for all $s \leq \delta$,  $\| \mathbf{\pi_{s}} - \mathbf{\pi_{0}} \|_{TV} \leq \epsilon \slash 2$.
\end{Cor}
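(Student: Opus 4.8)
The plan is to obtain this corollary as a bookkeeping consequence of Proposition~\ref{pr: four} and Proposition~\ref{pr: two}: Proposition~\ref{pr: four} already gives continuity of $\mathbf{\pi_s}$ at $s=0$ with an explicit $\delta$ expressed through the smallest nonzero singular value $\sigma$ of $\mathbb{I} - \mathbf{P_0}$, and Proposition~\ref{pr: two} lets us replace that (a priori opaque) $\sigma$ by a quantity expressed through the mixing time. First I would apply Proposition~\ref{pr: four} with target accuracy $\epsilon/2$ in place of $\epsilon$; this shows that $\|\mathbf{\pi_s} - \mathbf{\pi_0}\|_{TV} \leq \epsilon/2$ for all $s \leq \delta_0$, where $\delta_0 = \frac{(\epsilon/2)\sigma}{2 n^{3/2}} = \frac{\epsilon \sigma}{4 n^{3/2}}$.

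Next I would bound $\sigma$ from below using Proposition~\ref{pr: two} applied to $\mathbf{P} = \mathbf{P_0}$ at accuracy $\epsilon/2$, which gives $\frac{1 - \sqrt{n}\,\epsilon}{\sigma} \leq t_{mix}(\mathbf{P_0}, \epsilon/2)$. Under the hypothesis $0 < \epsilon < 1/\sqrt{n}$ the numerator $1 - \sqrt{n}\,\epsilon$ is strictly positive, so this rearranges to the genuine lower bound $\sigma \geq \frac{1 - \sqrt{n}\,\epsilon}{t_{mix}(\mathbf{P_0}, \epsilon/2)}$; and since $t_{mix}(\mathbf{P_0}, \epsilon/2) \leq t_{mix}(\epsilon/2)$ by definition of the latter, we further get $\sigma \geq \frac{1 - \sqrt{n}\,\epsilon}{t_{mix}(\epsilon/2)}$. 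Substituting this into the formula for $\delta_0$ yields
\begin{equation}
\delta_0 = \frac{\epsilon \sigma}{4 n^{3/2}} \;\geq\; \frac{\epsilon\,(1 - \sqrt{n}\,\epsilon)}{4 n^{3/2}\, t_{mix}(\epsilon/2)} \;=\; \delta,
\end{equation}
so the $\delta$ in the statement is no larger than $\delta_0$. Since the conclusion of Proposition~\ref{pr: four} holds for every $s \leq \delta_0$, it holds a fortiori for every $s \leq \delta$, which is exactly the assertion of the corollary.

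There is no real obstacle here; the statement is a direct corollary. The only points requiring a little care are invoking Proposition~\ref{pr: four} at accuracy $\epsilon/2$ rather than $\epsilon$ (so that the final bound is $\epsilon/2$, leaving room for the $(k+1)^2/(2T)$ term of Proposition~\ref{pr: three} in the later arguments) and verifying that $1 - \sqrt{n}\,\epsilon > 0$, so that the replacement of $\sigma$ by a mixing-time expression does not produce a vacuous or sign-reversed inequality — this is precisely the role of the restriction $0 < \epsilon < 1/\sqrt{n}$.
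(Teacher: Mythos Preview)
Your proposal is correct and follows exactly the route indicated in the paper: apply Proposition~\ref{pr: four} at accuracy $\epsilon/2$ to get $\delta_0 = \epsilon\sigma/(4n^{3/2})$, then invoke Proposition~\ref{pr: two} at accuracy $\epsilon/2$ together with $t_{mix}(\mathbf{P_0},\epsilon/2)\le t_{mix}(\epsilon/2)$ to bound $\sigma$ below and conclude $\delta\le\delta_0$. Your care in noting that the hypothesis $0<\epsilon<1/\sqrt{n}$ is precisely what makes $1-\sqrt{n}\,\epsilon>0$ (so the rearrangement is legitimate) is exactly the point.
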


We now have all the necessary tools to find a bound for the stable adiabatic time.  We find our result and conclude our paper in the following section.

\section{\sc {A Bound for the Stable Adiabatic Time}}
\label{sec: result}
We devote this section to finding a bound for the stable adiabatic time entirely in terms of the largest mixing time.  We state our main result in the following theorem.

\begin{Thm}
Given a time-inhomogeneous, discrete-time Markov chain governed by adiabatic evolution between the irreducible and aperiodic $\mathbf{P_{0}}$  and $\mathbf{P_{1}}$, for any $\epsilon > 0$,

\begin{equation}
t_{sad}(\mathbf{P_{0}}, \mathbf{P_{1}}, \epsilon) = O \left( \frac{t_{mix}^{4}(\epsilon \slash 2)}{\epsilon^{3}} \right).
\end{equation}

\end{Thm}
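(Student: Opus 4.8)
**The plan is to combine Theorem~\ref{th: three} (which controls the error for $k/T \geq \delta$) with Corollary~\ref{cor: one} and Proposition~\ref{pr: three} (which together control the error for $k/T \leq \delta$), and then choose $T$ large enough that both bounds are below $\epsilon$.**

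First I would split the index range $1 \leq k \leq T$ at the threshold $\delta = \frac{\epsilon'(1-\sqrt{n}\epsilon')}{4n^{3/2} t_{mix}(\epsilon'/2)}$ supplied by Corollary~\ref{cor: one}, where I will take $\epsilon' = \epsilon/2$ (or some fixed fraction of $\epsilon$), so that $\|\mathbf{\pi_s} - \mathbf{\pi_0}\|_{TV} \leq \epsilon/4$ for all $s \leq \delta$. For the upper range $\delta \leq k/T \leq 1$, Theorem~\ref{th: three} directly gives $\|\mathbf{\pi_0}\mathbf{P_{1/T}}\cdots\mathbf{P_{k/T}} - \mathbf{\pi_{k/T}}\|_{TV} \leq \epsilon/2$ as soon as $T \geq \frac{2 t_{mix}^2(\epsilon/4)}{(\epsilon/2)\delta} = \frac{4 t_{mix}^2(\epsilon/4)}{\epsilon \delta}$. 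Substituting the expression for $\delta$, this requires $T = \Omega\!\left(\frac{t_{mix}^2(\epsilon/4)\, t_{mix}(\epsilon/4) \, n^{3/2}}{\epsilon^2 (1-\sqrt n \epsilon)}\right)$, i.e. $T = O\!\left(\frac{t_{mix}^3(\epsilon/4)}{\epsilon^2}\right)$ up to the dimensional factor.

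Next I would handle the lower range $1 \leq k \leq \delta T$ using Proposition~\ref{pr: three}: there, $\|\mathbf{\pi_0}\mathbf{P_{1/T}}\cdots\mathbf{P_{k/T}} - \mathbf{\pi_{k/T}}\|_{TV} \leq \|\mathbf{\pi_{k/T}} - \mathbf{\pi_0}\|_{TV} + \frac{(k+1)^2}{2T}$. Since $k/T \leq \delta$, the first term is $\leq \epsilon/4$ by Corollary~\ref{cor: one}; for the second term I need $\frac{(k+1)^2}{2T} \leq \epsilon/4$, and using $k \leq \delta T$ this is implied by $\frac{(\delta T + 1)^2}{2T} \leq \epsilon/4$, which (for $\delta T \geq 1$, say) reduces to $T \gtrsim \frac{\delta^2 T^2}{T} = \delta^2 T$ — wait, that is automatically comparable, so more carefully I need $2\delta^2 T \lesssim \epsilon$, i.e. $T = O(\epsilon/\delta^2)$. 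This is an \emph{upper} bound on $T$ coming from the lower range, competing with the \emph{lower} bound from the upper range, so the key point is to verify these are compatible: $\epsilon/\delta^2 = \Omega(t_{mix}^2(\epsilon/4)/(\epsilon \delta))$ iff $\epsilon^2 \gtrsim \delta\, t_{mix}^2(\epsilon/4)$, and since $\delta \sim \epsilon/(n^{3/2} t_{mix}(\epsilon/4))$ this becomes $\epsilon^2 \gtrsim \epsilon\, t_{mix}(\epsilon/4)/n^{3/2}$, which fails in general — so the naive bound $k \leq \delta T$ on the second term is too crude, and I should instead note that the Proposition~\ref{pr: three} estimate is only \emph{needed} for small $k$, specifically $k \leq \sqrt{\epsilon T/2}$ suffices to make $\frac{(k+1)^2}{2T}\leq \epsilon/4$, while for $\sqrt{\epsilon T/2} \leq k$ one should fall back on Theorem~\ref{th: three}'s argument with the running threshold $\delta_k = k/T$.

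Thus the cleanest route is: for each $k$, apply Theorem~\ref{th: three} with $\delta$ replaced by $k/T$ when $k/T$ is not too small, which gives the bound whenever $T \geq \frac{2 t_{mix}^2(\epsilon/2)}{\epsilon (k/T)}$, i.e. $k \geq \frac{2 t_{mix}^2(\epsilon/2)}{\epsilon}$; and for the finitely many small indices $k < \frac{2 t_{mix}^2(\epsilon/2)}{\epsilon}$, apply Proposition~\ref{pr: three} together with Corollary~\ref{cor: one}, which needs $\frac{(k+1)^2}{2T} \leq \epsilon/4$ uniformly over $k < \frac{2t_{mix}^2(\epsilon/2)}{\epsilon}$, i.e. $T \geq \frac{2}{\epsilon}\left(\frac{2 t_{mix}^2(\epsilon/2)}{\epsilon}\right)^2 = \frac{8 t_{mix}^4(\epsilon/2)}{\epsilon^3}$, and simultaneously needs $k/T \leq \delta$ over this range, i.e. $\frac{2 t_{mix}^2(\epsilon/2)}{\epsilon T} \leq \delta$, which is the milder requirement $T = \Omega(t_{mix}^3(\epsilon/4)/\epsilon^2)$ already dominated by the $\epsilon^{-3}$ term. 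Taking $T = \left\lceil \frac{8 t_{mix}^4(\epsilon/2)}{\epsilon^3}\right\rceil$ then makes $\|\mathbf{\pi_0}\mathbf{P_{1/T}}\cdots\mathbf{P_{k/T}} - \mathbf{\pi_{k/T}}\|_{TV} < \epsilon$ for all $1 \leq k \leq T$, yielding $t_{sad}(\mathbf{P_0},\mathbf{P_1},\epsilon) = O(t_{mix}^4(\epsilon/2)/\epsilon^3)$ as claimed. \textbf{The main obstacle is the bookkeeping around the crossover index:} one must choose the split point between the Proposition~\ref{pr: three} regime and the Theorem~\ref{th: three} regime correctly so that the $\frac{(k+1)^2}{2T}$ term stays controlled — this is exactly where the fourth power of $t_{mix}$ and the $\epsilon^{-3}$ enter, since the number of "bad" small indices is itself of order $t_{mix}^2/\epsilon$, and squaring that in the Proposition~\ref{pr: three} bound produces $t_{mix}^4/\epsilon^2$, divided by an extra $\epsilon$ from requiring the bound to be $\leq \epsilon$. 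Care is also needed to absorb the dimensional factor $n^{3/2}$ from Corollary~\ref{cor: one}; since $t_{mix}(\epsilon/2)$ itself grows at least like $\log n$ (indeed, $t_{mix} \geq \frac{1-2\sqrt n\epsilon}{\sigma}$ from Proposition~\ref{pr: two} is the relevant lever here), one argues that the $n^{3/2}$ contribution is subsumed into the constant or into the $t_{mix}$ factors for the regime of $\epsilon$ under consideration.
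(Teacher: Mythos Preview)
Your approach is correct and essentially identical to the paper's: both split the index range at a threshold of order $t_{mix}^2(\epsilon/2)/\epsilon$, applying Theorem~\ref{th: three} above the threshold and Proposition~\ref{pr: three} plus Corollary~\ref{cor: one} below it, with the $t_{mix}^4/\epsilon^3$ arising exactly as you note from squaring the threshold inside the $(k+1)^2/(2T)$ term. The paper parametrizes the split slightly differently---fixing $\delta = \sqrt{\epsilon/T} - 1/T$ and back-solving for the required $T$---and, like you, disposes of the $n^{3/2}$ factor from Corollary~\ref{cor: one} by appealing to the asymptotic regime $\epsilon \to 0$.
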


\begin{proof}
\

We first provide a sketch of the proof followed by the technical details.  
Our proof is based on the results of Theorem \ref{th: three}, Proposition \ref{pr: three}, and Corollary \ref{cor: one}.
Specifically, we divide our proof into two cases.  In the first case, we will show how to select $T$ and $\delta$ in order to satisfy the two conditions in Theorem \ref{th: three}, namely:

$$ T \geq \frac{2 t_{mix}^{2}(\epsilon \slash 2)}{\epsilon \delta}, $$
and 
$$ \delta \leq k \slash T \leq 1.$$  Therefore, by Theorem \ref{th: three},  we have
$$ \| \mathbf{\pi_{0}} \mathbf{P_{\frac{1}{T}}} \cdots \mathbf{P_{\frac{k}{T}}} - \mathbf{\pi_{\frac{k}{T}}} \|_{TV} \leq \epsilon. $$  However, the selected $T$ is not yet $t_{sad}(\mathbf{P_{0}}, \mathbf{P_{1}}, \epsilon)$ since this only holds for $k$ such that $$ \delta \leq k \slash T \leq 1.$$  In the second case, we will use the results of Proposition \ref{pr: three} and Corollary \ref{cor: one} to show that for the same selected $T$ and $\delta$,
$$ \| \mathbf{\pi_{0}} \mathbf{P_{\frac{1}{T}}} \cdots \mathbf{P_{\frac{k}{T}}} - \mathbf{\pi_{\frac{k}{T}}} \|_{TV} \leq \epsilon,$$ even in the case when $$k \slash T \leq \delta < 1.$$  Therefore, we conclude that the selected $T$ is a sufficient condition for $t_{sad}(\mathbf{P_{0}}, \mathbf{P_{1}}, \epsilon)$.

We now proceed with the details of the proof, starting with the first case.  
Let $\epsilon > 0$.  For this fixed $\epsilon$, we choose $T$  be an integer such that
\begin{align*}
T &\geq \frac{4 t_{mix}^{4}(\epsilon \slash 2)}{\epsilon^{3}} + \frac{4 t_{mix}^{2}(\epsilon \slash 2)}{\epsilon^{2}} + \frac{1}{\epsilon} \\
&= \left( \frac{2 t_{mix}^{2}(\epsilon \slash 2)}{\epsilon \sqrt{\epsilon}} + \frac{1}{\sqrt{\epsilon}} \right)^{2}.
\end{align*}

\noindent This implies
 \begin{align*}
 \sqrt{T} &\geq \frac{2 t_{mix}^{2}(\epsilon \slash 2)}{\epsilon \sqrt{\epsilon}} + \frac{1}{\sqrt{\epsilon}}.
 \end{align*}

\noindent Multiplying either side by $\sqrt{\epsilon}$  and subtracting $1$  from either side, we obtain
\begin{align*}
\sqrt{\epsilon} \sqrt{T} - 1 &\geq \frac{2 t_{mix}^{2}(\epsilon \slash 2)}{\epsilon}.
\end{align*}

\noindent Notice that  $\sqrt{\epsilon} \sqrt{T} - 1 > 0$  because $$\frac{2 t_{mix}^2(\epsilon \slash 2)}{\epsilon} > 0.$$

\noindent Dividing either side of the above inequality by $\sqrt{\epsilon} \sqrt{T} - 1$, we obtain
\begin{align*}
1 &\geq  \frac{2 t_{mix}^{2}(\epsilon \slash 2)}{\epsilon \left( \sqrt{\epsilon} \sqrt{T}  - 1 \right)}.
\end{align*}

\noindent Multiplying either side by $T$, we obtain
\begin{align*}
T &\geq \frac{2 t_{mix}^{2}(\epsilon \slash 2)}{\epsilon \left( \sqrt{\frac{\epsilon}{T}} - \frac{1}{T} \right)}.
\end{align*}

\noindent Now, let $$\delta =  \sqrt{\frac{\epsilon}{T}} - \frac{1}{T},$$

\noindent then clearly $$T \geq \frac{2 t_{mix}^{2}(\epsilon \slash 2)}{\epsilon \delta}.$$

\noindent Next, let $k$  be an integer such that $$ \delta = \sqrt{\frac{\epsilon}{T}} - \frac{1}{T} \leq \frac{k}{T} \leq 1.$$

\noindent Then by Theorem \ref{th: three}, we conclude that $$ \| \mathbf{\pi_{0}} \mathbf{P_{\frac{1}{T}}} \cdots \mathbf{P_{\frac{k}{T}}} - \mathbf{\pi_{\frac{k}{T}}} \|_{TV} \leq \epsilon.  $$


Now in the second (complementary) case, i.e., when $k \slash T \leq \delta < 1$,  we will show that for the same selected $\delta =  \sqrt{\frac{\epsilon}{T}} - \frac{1}{T},$  and  $T$, it is still true that: $$ \| \mathbf{\pi_{0}} \mathbf{P_{\frac{1}{T}}} \cdots \mathbf{P_{\frac{k}{T}}} - \mathbf{\pi_{\frac{k}{T}}} \|_{TV} \leq \epsilon,$$

\noindent Let $k$  be an integer such that $$0 \leq \frac{k}{T} \leq \sqrt{\frac{\epsilon}{T}} - \frac{1}{T} = \delta.$$

\noindent Then, $$\frac{k+1}{T} \leq \sqrt{\frac{\epsilon}{T}}.$$

\noindent Using Proposition~\ref{pr: three}, we have

\begin{align*}
\| \mathbf{\pi_{0}} \mathbf{P_{\frac{1}{T}}} \cdots \mathbf{P_{\frac{k}{T}}} - \mathbf{\pi_{\frac{k}{T}}} \|_{TV} &\leq \| \mathbf{\pi_{\frac{k}{T}}} - \mathbf{\pi_{0}} \|_{TV} + \frac{(k+1)^{2}}{2 T} \\
&= \| \mathbf{\pi_{\frac{k}{T}}} - \mathbf{\pi_{0}} \|_{TV} + \frac{T}{2} \left( \frac{k+1}{T} \right)^{2} \\
&\leq \| \mathbf{\pi_{\frac{k}{T}}} - \mathbf{\pi_{0}} \|_{TV} + \frac{T}{2} \left( \sqrt{\frac{\epsilon}{T}} \right)^{2} \\
&= \| \mathbf{\pi_{\frac{k}{T}}} - \mathbf{\pi_{0}} \|_{TV} + \frac{\epsilon}{2}.
\end{align*}

\noindent Next, from Corollary~\ref{cor: one},  as long as $\epsilon < 1 \slash \sqrt{n}$  and $ \sqrt{\frac{\epsilon}{T}} - \frac{1}{T} \leq \frac{ \epsilon (1- \sqrt{n} \epsilon)}{4 n^{3 \slash 2} t_{mix}(\epsilon \slash 2)}$,

\noindent we have  $$ \| \mathbf{\pi_{0}} \mathbf{P_{\frac{1}{T}}} \cdots \mathbf{P_{\frac{k}{T}}} - \mathbf{\pi_{\frac{k}{T}}} \|_{TV} \leq \epsilon $$

\noindent for $$0 \leq \frac{k}{T} \leq \sqrt{\frac{\epsilon}{T}} - \frac{1}{T}.$$

\noindent It should be clear that as $\epsilon \rightarrow 0$, $$ \sqrt{\frac{\epsilon}{T}} - \frac{1}{T} \leq \frac{ \epsilon (1- \sqrt{n} \epsilon)}{4 n^{3 \slash 2} t_{mix}(\epsilon \slash 2)}  $$

\noindent when $$T \geq \frac{4 t_{mix}^{4}(\epsilon \slash 2)}{\epsilon^{3}} + \frac{4 t_{mix}^{2}(\epsilon \slash 2)}{\epsilon^{2}} + \frac{1}{\epsilon}.$$

\noindent This tells us that as $\epsilon \rightarrow 0$, $$t_{sad}(\mathbf{P_{0}}, \mathbf{P_{1}}, \epsilon) \leq \frac{4 t_{mix}^{4}(\epsilon \slash 2)}{\epsilon^{3}} + \frac{4 t_{mix}^{2}(\epsilon \slash 2)}{\epsilon^{2}} + \frac{1}{\epsilon}.$$

\noindent We conclude that $$ t_{sad}(\mathbf{P_{0}}, \mathbf{P_{1}}, \epsilon) = O \left( \frac{t_{mix}^{4}(\epsilon \slash 2)}{\epsilon^{3}} \right).$$

\end{proof}

We see that this result somewhat reaffirms what has been shown in the Quantum Adiabatic Theorem in \cite{ambainis2006elementary}, but a main difference is that the inverse spectral gap bound for the quantum system is replaced with a mixing time bound in our result.  Our result also has an extra multiple of $1 \slash \epsilon$.  Notice that the inverse spectral gap was a natural choice for the Quantum Adiabatic Theorem due to the Hamiltonian matrix being self-adjoint.  For general, not necessarily reversible, Markov Chains, the Adiabatic Theorem is expressed using mixing times. 

\section{\sc {Proofs}}
\label{sec: proofs}
\subsection{Proof of Proposition 1}
\ 

\noindent Recall the proof of Theorem \ref{th: one} in \cite{kovchegov2010note}.  We notice that $$t_{ad}(\mathbf{P_{0}}, \mathbf{P_{1}}, \epsilon) \leq K t_{mix}(\mathbf{P_{1}}, \epsilon \slash 2)$$

\noindent where $$ 1 + \left( \frac{\left(1 + \frac{1}{K-1} \right)^{K-1}}{e} \right)^{t_{mix}(\mathbf{P_{1}}, \epsilon \slash 2)} \leq \epsilon \slash 2. $$

\noindent After performing some basic algebra and taking the natural logarithm of either side of the equation, we see that
\begin{align*}
\ln \left( 1 - \epsilon \slash 2 \right) &\leq t_{mix} \left( \mathbf{P_{1}}, \epsilon \slash 2 \right) \left( \ln \left( \left( 1 + \frac{1}{K-1} \right)^{K-1} \right) - 1 \right) \\
&=  t_{mix} \left( \mathbf{P_{1}}, \epsilon \slash 2 \right) \left( \left( K - 1 \right) \ln \left( 1 + \frac{1}{K-1} \right) - 1 \right) \\
&= t_{mix} \left( \mathbf{P_{1}}, \epsilon \slash 2 \right) \left( \left( K - 1 \right)  \left( \sum_{j=1}^{\infty} (-1)^{j+1} \frac{1}{j(K-1)^{j} }\right) - 1 \right) \\
&= t_{mix} \left( \mathbf{P_{1}}, \epsilon \slash 2 \right) \left( \sum_{j=2}^{\infty} (-1)^{j+1} \frac{1}{j(K-1)^{j-1} } \right) \\
&= t_{mix} \left( \mathbf{P_{1}}, \epsilon \slash 2 \right) \left( \sum_{j=1}^{\infty} (-1)^{j+1} \frac{1}{j(K-1)^{j} } \left( \frac{-j}{j+1} \right) \right).
\end{align*}

\noindent It is clear now that if we select $K$  large enough so that 
\begin{align*} 
\ln \left( 1 - \epsilon \slash 2 \right) &\leq - t_{mix} \left( \mathbf{P_{1}}, \epsilon \slash 2 \right) \left( \sum_{j=1}^{\infty} (-1)^{j+1} \frac{1}{j(K-1)^{j} } \right) \\
&= - t_{mix} \left( \mathbf{P_{1}}, \epsilon \slash 2 \right) \ln \left( 1 + \frac{1}{K-1} \right)
\end{align*}

\noindent then $K$  will be large enough to satisfy the previous inequality. \\

\noindent Exponentiating either side of the equation and performing the basic algebra required to solve for $K$  we see that 
\begin{align*} 
K &\geq 1 + \left( e^{\left( \frac{- \ln (1 - \epsilon \slash 2)}{t_{mix}(\mathbf{P_{1}}, \epsilon \slash 2)} \right)} - 1 \right)^{-1} \\
&= 1 + \left( \sum_{j=0}^{\infty} \left( \frac{1}{j!} \left( \frac{ - \ln (1 - \epsilon \slash 2)}{t_{mix}(\mathbf{P_{1}}, \epsilon \slash 2) } \right)^{j} \right) - 1 \right)^{-1} \\
&= 1 + \left( \frac{- \ln (1 - \epsilon \slash 2)}{t_{mix}(\mathbf{P_{1}}, \epsilon \slash 2)} \sum_{j=1}^{\infty} \frac{1}{j!} \left( \frac{ - \ln (1 - \epsilon \slash 2)}{ t_{mix}(\mathbf{P_{1}}, \epsilon \slash 2)} \right)^{j-1} \right)^{-1} \\
&= 1 + \frac{t_{mix}(\mathbf{P_{1}}, \epsilon \slash 2)}{- \ln (1 - \epsilon \slash 2)} \left( \sum_{j=1}^{\infty} \frac{1}{j!} \left( \frac{ - \ln (1 - \epsilon \slash 2)}{ t_{mix}(\mathbf{P_{1}}, \epsilon \slash 2)} \right)^{j-1} \right)^{-1}. 
\end{align*} 

\noindent Notice that the infinite sum that we have is the sum of positive terms and the first term in the sum is $1$.  This tells us that $$ 1 \leq \sum_{j=1}^{\infty} \frac{1}{j!} \left( \frac{ - \ln (1 - \epsilon \slash 2)}{ t_{mix}(\mathbf{P_{1}}, \epsilon \slash 2)} \right)^{j-1} $$

\noindent therefore $$ 1 \geq \left( \sum_{j=1}^{\infty} \frac{1}{j!} \left( \frac{ - \ln (1 - \epsilon \slash 2)}{ t_{mix}(\mathbf{P_{1}}, \epsilon \slash 2)} \right)^{j-1} \right)^{-1}.  $$

\noindent This tells us that if we select $K$  such that $$ K \geq 1 + \frac{t_{mix}(\mathbf{P_{1}}, \epsilon \slash 2)}{- \ln ( 1 - \epsilon \slash 2)}$$

\noindent then the above inequality will be satisfied. \\

\noindent Finally we can expand $\ln (1 - \epsilon \slash 2)$  to find that $$K \geq 1 + \frac{2 t_{mix}(\mathbf{P_{1}}, \epsilon \slash 2)}{\epsilon} \left( \sum_{j=1}^{\infty} \frac{1}{j} \left( \frac{\epsilon}{2} \right)^{j-1} \right)^{-1}. $$

\noindent Again the infinite sum is the sum of positive terms, and the first term in the sum is $1$.  This tells us that $$ 1 \geq \left( \sum_{j=1}^{\infty} \frac{1}{j}  \left( \frac{\epsilon}{2} \right)^{j-1} \right)^{-1}.$$

\noindent We conclude that if we select $K$  such that $$K \geq \frac{2 t_{mix}(\mathbf{P_{1}}, \epsilon \slash 2)}{\epsilon}$$  then $$ 1 + \left( \frac{\left(1 + \frac{1}{K-1} \right)^{K-1}}{e} \right)^{t_{mix}(\mathbf{P_{1}}, \epsilon \slash 2)} \leq \epsilon \slash 2. $$

\noindent Therefore, we see that $$ t_{ad}(\mathbf{P_{0}}, \mathbf{P_{1}}, \epsilon) \leq \frac{2 t_{mix}^{2}(\mathbf{P_{1}}, \epsilon \slash 2)}{\epsilon}. $$

\subsection{Proof of Theorem 2}
\ 

\noindent To develop the tools for this theorem, we consider the following treatment of our probability transition matrices.  If we are given $s \in (0,1]$, then we see that $$\mathbf{P_{t}} = \left( 1 - \frac{t}{s} \right) \mathbf{P_{0}} + \frac{t}{s} \mathbf{P_{s}}$$  for all $t \in [0,s].$  

\noindent Defining $\mathbf{P_{t}^{(s)}} = \mathbf{P_{st}}$, we see that $$\mathbf{P_{t}^{(s)}} = (1 - t) \mathbf{P_{0}^{(s)}} + t \mathbf{P_{1}^{(s)}}$$  for all $t \in [0,1].$  We also define $\mathbf{\pi_{t}}^{(s)} = \mathbf{\pi_{st}}$.   

\noindent We see that $\{ \mathbf{P_{t}^{(s)}} \}_{t \in [0,1]}$  is a class of probability transition matrices where $\mathbf{P_{0}} = \mathbf{P_{0}^{(s)}}$  and $\mathbf{P_{s}} = \mathbf{P_{1}^{(s)}}$. 

\noindent Since the time-homogeneous Markov chains determined by $\mathbf{P_{0}}$  and $\mathbf{P_{s}}$  are irreducible and aperiodic, we can consider a time-inhomogeneous, discrete-time Markov chain governed by adiabatic evolution between these two time-homogeneous Markov chains.  We can apply Theorem \ref{th: one}  to show that $$ t_{ad}(\mathbf{P_{0}^{(s)}}, \mathbf{P_{1}^{(s)}}, \epsilon) = O \left( \frac{t_{mix}^{2}(\mathbf{P_{1}^{(s)}}, \epsilon \slash 2)}{\epsilon} \right). $$

\noindent Now let $\epsilon > 0$  and $\delta \in (0,1].$ \\

\noindent For $s \in [\delta, 1]$  we have that $T^{*} = t_{ad}( \mathbf{P_{0}^{(s)}}, \mathbf{P_{1}^{(s)}}, \epsilon)$  is the adiabatic time between $\mathbf{P_{0}^{(s)}}$  and $\mathbf{P_{1}^{(s)}}$. \\

\noindent This tells us that $$ \max_{\mathbf{\nu}} \| \mathbf{\nu} \mathbf{P_{\frac{1}{T^{*}}}^{(s)}} \mathbf{P_{\frac{2}{T^{*}}}^{(s)}} \cdots \mathbf{P_{1}^{(s)}} - \mathbf{\pi_{1}}^{(s)} \|_{TV} \leq \epsilon. $$

\noindent Because $\mathbf{\pi_{0}}^{(s)}$  is a specific distribution, we have that 

\begin{align*} 
\epsilon &\geq \| \mathbf{\pi_{0}}^{(s)} \mathbf{P_{\frac{1}{T^{*}}}^{(s)}} \mathbf{P_{\frac{2}{T^{*}}}^{(s)}} \cdots \mathbf{P_{1}^{(s)}} - \mathbf{\pi_{1}}^{(s)} \|_{TV} \\
&= \| \mathbf{\pi_{0}}^{(s)} \mathbf{P_{\frac{(1 \slash s)}{(T^{*} \slash s)}}^{(s)}} \mathbf{P_{\frac{( 2 \slash s )}{ ( T^{*} \slash s)}}^{(s)}} \cdots \mathbf{P_{\frac{(T^{*}\slash s)}{(T^{*} \slash s)}}^{(s)}} - \mathbf{\pi_{1}^{(s)}} \|_{TV} \\
&= \| \mathbf{\pi_{0}} \mathbf{P_{\frac{1}{(T^{*} \slash s)}}} \mathbf{P_{\frac{2}{ ( T^{*} \slash s)}}} \cdots \mathbf{P_{\frac{s(T^{*} \slash s)}{(T^{*} \slash s)}}} - \mathbf{\pi_{\frac{s(T^{*} \slash s)}{(T^{*} \slash s)}}} \|_{TV}.
\end{align*}

\noindent Clearly if $T = t_{ad}( \mathbf{P_{0}^{(s)}}, \mathbf{P_{1}^{(s)}}, \epsilon) \slash s$, then $$ \| \mathbf{\pi_{0}} \mathbf{P_{\frac{1}{T}}} \mathbf{P_{\frac{2}{T}}} \cdots \mathbf{P_{\frac{sT}{T}}} - \mathbf{\pi_{\frac{sT}{T}}} \|_{TV} \leq \epsilon.$$

\noindent We showed in Proposition \ref{pr: one} that for $\epsilon > 0$  $$ t_{ad}( \mathbf{P_{0}^{(s)}}, \mathbf{P_{1}^{(s)}}, \epsilon) \leq \frac{ 2 t_{mix}^{2}(\mathbf{P_{s}}, \epsilon \slash 2)}{\epsilon}.$$

\noindent It follows that for $\epsilon > 0$  $$ t_{ad}( \mathbf{P_{0}^{(s)}}, \mathbf{P_{1}^{(s)}}, \epsilon) \leq \frac{ 2 t_{mix}^{2}(\epsilon \slash 2)}{\epsilon}.$$

\noindent For $\epsilon > 0$  if we let $T$  any integer such that $$ T \geq \frac{ 2 t_{mix}^{2}(\epsilon \slash 2)}{\epsilon \delta}  $$

\noindent we have $$ \| \mathbf{\pi_{0}} \mathbf{P_{\frac{1}{T}}} \mathbf{P_{\frac{2}{T}}} \cdots \mathbf{P_{\frac{k}{T}}} - \mathbf{\pi_{\frac{k}{T}}} \|_{TV} \leq \epsilon$$

\noindent for all $\delta \leq k \slash T \leq 1$. \\

\subsection{Proof of Proposition 2}
\ 

\noindent We know that an irreducible, aperiodic time-homogeneous Markov chain governed by a probability transition matrix $\mathbf{P}$  has a unique stationary distribution, making the nullity of $(\mathbb{I} \lambda - \mathbf{P})$  equal to one when $\lambda = 1$.  This would necessarily imply that the rank of $(\mathbb{I} - \mathbf{P})$  is $n-1$.  

\noindent Let $\sigma_{1} \geq \cdots \geq \sigma_{n-1} = \sigma$  be the positive singular values of $( \mathbb{I} - \mathbf{P})$  with respect to the Euclidean inner product, which we will denote $\| \cdot \|_{2}$  throughout this paper.  This implies that there exists an orthonormal basis $\{ \mathbf{v_{1}}, \cdots, \mathbf{v_{n}} \}$  such that $\mathbf{v_{j}}(\mathbb{I} - \mathbf{P})(\mathbb{I} - \mathbf{P})^{T} = \sigma_{j}^{2} \mathbf{v_{j}}$  for $1 \leq j \leq n-1$  and $\mathbf{v_{n}}(\mathbb{I} - \mathbf{P})(\mathbb{I} - \mathbf{P})^{T} = \mathbf{0}$. 

\noindent Clearly $\mathbf{v_{n}} = \mathbf{\pi} \slash \| \mathbf{\pi} \|_{2}$.  \\

\noindent For $t \in \mathbb{N}$  define $\mathbf{M_{t-1}} = \mathbb{I} + \mathbf{P} + \mathbf{P}^{2} + \cdots + \mathbf{P}^{t-1}$. \\

\noindent Also define $\mathbf{\pi}$  to be the stationary distribution of $\mathbf{P}$. \\
 
\noindent Notice that $\mathbb{I} - \mathbf{P}^{t} = (\mathbb{I} - \mathbf{P}) \mathbf{M_{t-1}}$. \\

\noindent For irreducible, aperiodic Markov chains we have that if $\lambda_{1}, \cdots \lambda_{n}$  are the eigenvalues of $\mathbf{P}$  such that $1= \lambda_{1} > | \lambda_{2} | \geq \cdots \geq | \lambda_{n} |$, then $$t, \frac{1 - \lambda_{2}^{t}}{1 - \lambda_{2}}, \cdots, \frac{1 - \lambda_{n}^{t}}{1 - \lambda_{n}}$$

\noindent are the eigenvalues of $\mathbf{M_{t-1}}$. Notice that $\mathbf{M_{t-1}}$  must be invertible because all eigenvalues are nonzero and also notice that $t$  is the largest eigenvalue in modulus. \\

\noindent This implies that $\mathbb{I} - \mathbf{P} = (\mathbb{I} - \mathbf{P}^{t}) \mathbf{M_{t-1}}^{-1}$  and we see that $$ \sigma = \| \mathbf{v_{n-1}} (\mathbb{I} - \mathbf{P}) \|_{2} = \| \mathbf{v_{n-1}} (\mathbb{I} - \mathbf{P}^{t}) \mathbf{M_{t-1}}^{-1} \|_{2}. $$

\noindent We see that if $\| \cdot \|_{*}$  is the standard matrix norm, then

\begin{align*}
\| \mathbf{v_{n-1}} (\mathbb{I} - \mathbf{P}^{t}) \|_{2} &= \| \mathbf{v_{n-1}} (\mathbb{I} - \mathbf{P}^{t}) \mathbf{M_{t-1}}^{-1} \mathbf{M_{t-1}} \|_{2} \\
&\leq \| \mathbf{v_{n-1}} (\mathbb{I} - \mathbf{P}^{t}) \mathbf{M_{t-1}}^{-1} \|_{2} \| \mathbf{M_{t-1}} \|_{*} \\
&\leq t \| \mathbf{v_{n-1}} (\mathbb{I} - \mathbf{P}^{t}) \mathbf{M_{t-1}}^{-1} \|_{2} \\
&= t \sigma.
\end{align*}

\noindent If we let $\mathbf{u}$  be a vector such that for $1 \leq i \leq n$,  $\mathbf{u}(i) = 0$  whenever $\mathbf{v_{n-1}}(i) \geq 0$  and $\mathbf{u}(i) = - \mathbf{v_{n-1}}(i)$  whenever $\mathbf{v_{n-1}}(i) < 0$, then we have that $\mathbf{\nu_{1}} = \mathbf{u} \slash \| \mathbf{u} \|_{1}$  and $\mathbf{\nu_{2}} = (\mathbf{v_{n-1}} + \mathbf{u}) \slash \| \mathbf{v_{n-1}} + \mathbf{u} \|_{1}$  are probability distributions and 

\begin{align*} 
\mathbf{v_{n-1}}(\mathbb{I} - \mathbf{P}^{t}) =  \mathbf{v_{n-1}} &+ (\| \mathbf{u} \|_{1} - \| \mathbf{v_{n-1}} + \mathbf{u} \|_{1}) \mathbf{\pi} \\
&- \left( \| \mathbf{v_{n-1}} + \mathbf{u} \|_{1} \left( \mathbf{\nu_{2}} - \mathbf{\pi} \right) \mathbf{P}^{t} -  \| \mathbf{u} \|_{1} \left( \mathbf{\nu_{1}} - \mathbf{\pi} \right) \mathbf{P}^{t} \right).
\end{align*}

\noindent For $\mathbf{x},\mathbf{y} \in \mathbb{R}^{n}$ such that $\mathbf{x}$  and $\mathbf{y}$  are probability measures, we see that $$ \frac{1}{2} \| \mathbf{x} -\mathbf{y} \|_{2} \leq \| \mathbf{x} - \mathbf{y} \|_{TV} \leq \frac{\sqrt{n}}{2} \| \mathbf{x} - \mathbf{y} \|_{2}.$$

\noindent Through the triangle inequality we see that if we select $t = t_{mix}(\mathbf{P}, \epsilon)$, then 
\begin{align*}
\big\| \left( \| \mathbf{v_{n-1}} + \mathbf{u} \|_{1} \left( \mathbf{\nu_{2}} - \mathbf{\pi} \right) \mathbf{P}^{t} -  \| \mathbf{u} \|_{1} \left( \mathbf{\nu_{1}} - \mathbf{\pi} \right) \mathbf{P}^{t} \right) \big\|_{2} &\leq \| \mathbf{v_{n-1}} + \mathbf{u} \|_{1} \cdot \| \left( \mathbf{\nu_{2}} - \mathbf{\pi} \right) \mathbf{P}^{t} \|_{2}  \\
& \qquad +  \| \mathbf{u} \|_{1} \cdot \| \left( \mathbf{\nu_{1}} - \mathbf{\pi} \right) \mathbf{P}^{t} \|_{2} \\
&\leq 2 \| \mathbf{v_{n-1}} + \mathbf{u} \|_{1} \cdot \| \left( \mathbf{\nu_{2}} - \mathbf{\pi} \right) \mathbf{P}^{t} \|_{TV}  \\
& \qquad +  2 \| \mathbf{u} \|_{1} \cdot \| \left( \mathbf{\nu_{1}} - \mathbf{\pi} \right) \mathbf{P}^{t} \|_{TV} \\
&\leq 2 (\| \mathbf{v_{n-1}} + \mathbf{u}  \|_{1} + \| \mathbf{u} \|_{1}) \epsilon  \\
&= 2 \| \mathbf{v_{n-1}} \|_{1} \epsilon \\
&\leq 2 \sqrt{n} \| \mathbf{v_{n-1}} \|_{2} \epsilon \\
&= 2 \sqrt{n} \epsilon. 
\end{align*}

\noindent Because $\mathbf{v_{n-1}}$  and $\mathbf{\pi}$  are orthogonal, we see that 
\begin{align*} 
\big\|  \mathbf{v_{n-1}} + (\| \mathbf{u} \|_{1} - \| \mathbf{v_{n-1}} - \mathbf{u} \|_{1}) \mathbf{\pi} \big\|_{2} &= \sqrt{1 + (\| \mathbf{u} \|_{1} - \| \mathbf{v_{n-1}} + \mathbf{u} \|_{1})^{2} \left( \| \mathbf{\pi} \|_{2} \right)^{2}} \\
& \geq 1.
\end{align*}

\noindent Now through the reverse triangle inequality, meaning that for vectors $\mathbf{x}$  and $\mathbf{y}$, $ \| \mathbf{x} - \mathbf{y} \|_{2} \geq \| \mathbf{x} \|_{2} - \| \mathbf{y} \|_{2}$, we see that if $ t = t_{mix}(\mathbf{P}, \epsilon) $, then $$ \| \mathbf{v_{n-1}}(\mathbb{I} - \mathbf{P}^{t}) \|_{2} \geq 1 - 2 \sqrt{n} \epsilon.$$

\noindent This now implies that $$ \frac{1 - 2\sqrt{n} \epsilon}{\sigma} \leq t_{mix}(\mathbf{P}, \epsilon). $$

\subsection{Proof of Proposition 3}
\ 

\noindent Because $\mathbf{\pi_{0}} \mathbf{P_{\frac{j}{T}}} = \mathbf{\pi_{0}}  + \frac{j}{T} \mathbf{\pi_{0}} (\mathbf{P_{1}} - \mathbf{P_{0}})$  for $1 \leq j \leq k$  we notice that $$ \mathbf{\pi_{0}} \mathbf{P_{\frac{j}{T}}} \cdots \mathbf{P_{\frac{k}{T}}} - \mathbf{\pi_{\frac{k}{T}}} =\mathbf{\pi_{0}} \mathbf{P_{\frac{j+1}{T}}} \cdots \mathbf{P_{\frac{k}{T}}} - \mathbf{\pi_{\frac{k}{T}}} + \frac{j}{T} \mathbf{\pi_{0}} (\mathbf{P_{1}} - \mathbf{P_{0}})  \mathbf{P_{\frac{j+1}{T}}} \cdots \mathbf{P_{\frac{k}{T}}} $$

\noindent for $1 \leq j \leq k-1$  and $$ \mathbf{\pi_{0}} \mathbf{P_{\frac{k}{T}}} - \mathbf{\pi_{\frac{k}{T}}} = (\mathbf{\pi_{0}} - \mathbf{\pi_{\frac{k}{T}}}) + \frac{k}{T} \mathbf{\pi_{0}} (\mathbf{P_{1}} - \mathbf{P_{0}}).$$

\noindent Using the convention $\mathbf{P_{j+1}} \cdots \mathbf{P_{k}} = \mathbb{I}$  when $j \geq k$, we would see that $$ \mathbf{\pi_{0}} \mathbf{P_{\frac{1}{T}}} \cdots \mathbf{P_{\frac{k}{T}}} - \mathbf{\pi_{\frac{k}{T}}} = (\mathbf{\pi_{0}} - \mathbf{\pi_{\frac{k}{T}}}) + \sum_{j=1}^{k} \frac{j}{T} \mathbf{\pi_{0}} (\mathbf{P_{1}} - \mathbf{P_{0}}) \mathbf{P_{\frac{j+1}{T}}} \cdots \mathbf{P_{\frac{k}{T}}}.$$

\noindent Taking the total variation norm to either side of the inequality, using the triangle inequality  and pulling out constants, we see that
\begin{align*}
\| \mathbf{\pi_{0}} \mathbf{P_{\frac{1}{T}}} \cdots \mathbf{P_{\frac{k}{T}}} - \mathbf{\pi_{\frac{k}{T}}}  \|_{TV} &= \| (\mathbf{\pi_{0}} - \mathbf{\pi_{\frac{k}{T}}}) + \sum_{j=1}^{k} \frac{j}{T} \mathbf{\pi_{0}} (\mathbf{P_{1}} - \mathbf{P_{0}}) \mathbf{P_{\frac{j+1}{T}}} \cdots \mathbf{P_{\frac{k}{T}}} \|_{TV} \\
&\leq \| \mathbf{\pi_{0}} - \mathbf{\pi_{\frac{k}{T}}} \|_{TV} + \sum_{j=1}^{k} \frac{j}{T} \| \mathbf{\pi_{0}} (\mathbf{P_{1}} - \mathbf{P_{0}}) \mathbf{P_{\frac{j+1}{T}}} \cdots \mathbf{P_{\frac{k}{T}}} \|_{TV}.
\end{align*}

\noindent Notice that for $1 \leq j \leq k-1$  $$ \mathbf{\pi_{0}} (\mathbf{P_{1}} - \mathbf{P_{0}}) \mathbf{P_{\frac{j+1}{T}}} \cdots \mathbf{P_{\frac{k}{T}}} = \mathbf{\pi_{0}} \mathbf{P_{1}} \mathbf{P_{\frac{j+1}{T}}} \cdots \mathbf{P_{\frac{k}{T}}} - \mathbf{\pi_{0}} \mathbf{P_{0}} \mathbf{P_{\frac{j+1}{T}}} \cdots \mathbf{P_{\frac{k}{T}}} $$

\noindent is the difference between two probability distributions and $$ \mathbf{\pi_{0}}(\mathbf{P_{1}} - \mathbf{P_{0}})  = \mathbf{\pi_{0}} \mathbf{P_{1}} - \mathbf{\pi_{0}} \mathbf{P_{0}} $$

\noindent is also the difference between two probability distributions. \\

\noindent Because we are taking the total variation norm to the difference of two probability distributions we see that $ \|  \cdot \|_{TV} = \frac{1}{2} \| \cdot \|_{1}$  where $\|  \cdot \|_{1}$  is the $l_{1}$-norm. \\

\noindent We have that for probability distributions $\mathbf{\mu}$  and $\mathbf{\nu}$, $ \| \mathbf{\mu} - \mathbf{\nu} \|_{TV} = \frac{1}{2} \| \mathbf{\mu} - \mathbf{\nu} \|_{1} \leq \frac{1}{2} (\| \mathbf{\mu} \|_{1} + \| \mathbf{\nu} \|_{1}) \leq 1$. \\

\noindent This tells us that $\| \mathbf{\pi_{0}} (\mathbf{P_{1}} - \mathbf{P_{0}}) \mathbf{P_{\frac{j+1}{T}}} \cdots \mathbf{P_{\frac{k}{T}}} \|_{TV} \leq 1$  for $1 \leq j \leq k$. \\

\noindent We see then that 
\begin{align*}
\| \mathbf{\pi_{0}} \mathbf{P_{\frac{1}{T}}} \cdots \mathbf{P_{\frac{k}{T}}} - \mathbf{\pi_{\frac{k}{T}}}  \|_{TV} &\leq  \| \mathbf{\pi_{\frac{k}{T}}} - \mathbf{\pi_{0}} \|_{TV} + \sum_{j=1}^{k} \frac{j}{T} \\
&= \| \mathbf{\pi_{\frac{k}{T}}} - \mathbf{\pi_{0}}  \|_{TV} + \frac{1}{T} \sum_{j=1}^{k} j \\
&= \| \mathbf{\pi_{\frac{k}{T}}} - \mathbf{\pi_{0}} \|_{TV} + \frac{k(k+1)}{2T} \\
&\leq \| \mathbf{\pi_{\frac{k}{T}}} - \mathbf{\pi_{0}} \|_{TV} + \frac{(k+1)^{2}}{2T}.
\end{align*}

\subsection{Proof of Proposition 4}
\ 

\noindent We begin with the creation of an orthonormal basis of eigenvectors associated with $(\mathbb{I} - \mathbf{P_{0}})(\mathbb{I} - \mathbf{P_{0}})^{T}$  by a singular value decomposition similar to the process we mentioned in Proposition \ref{pr: two}.

\noindent Here we let $\sigma_{1} \geq \cdots \geq \sigma_{n-1} = \sigma$  be the positive singular values of $( \mathbb{I} - \mathbf{P_{0}})$  with respect to the Euclidean inner product.  This implies that there exists an orthonormal basis $\{ \mathbf{v_{1}}, \cdots, \mathbf{v_{n}} \}$  such that $\mathbf{v_{j}}(\mathbb{I} - \mathbf{P_{0}})(\mathbb{I} - \mathbf{P_{0}})^{T} = \sigma_{j}^{2} \mathbf{v_{j}}$  for $1 \leq j \leq n-1$  and $\mathbf{v_{n}}(\mathbb{I} - \mathbf{P_{0}})(\mathbb{I} - \mathbf{P_{0}})^{T} = \mathbf{0}$. \\

\noindent Here $\mathbf{v_{n}} = \mathbf{\pi_{0}} \slash \| \mathbf{\pi_{0}} \|_{2}$.  \\

\noindent To show continuity at  $s = 0$ let $\epsilon > 0$  and first notice that for any $s \in [0,1]$, $(\mathbf{\pi_{s}} - \mathbf{\pi_{0}})( \mathbb{I} - \mathbf{P_{0}} )= s \mathbf{\pi_{s}}( \mathbf{P_{1}} - \mathbf{P_{0}})$. \\

\noindent Using the Euclidean norm, we see that if $\mathbf{P_{0}} \neq \mathbf{P_{1}}$  and $s \neq 0$, then $$\frac{\| (\mathbf{\pi_{s}} - \mathbf{\pi_{0}})( \mathbb{I} - \mathbf{P_{0}} ) \|_{2}}{\| \mathbf{\pi_{s}} - \mathbf{\pi_{0}} \|_{2}}= s \frac{\| \mathbf{\pi_{s}}( \mathbf{P_{1}} - \mathbf{P_{0}}) \|_{2}}{ \| \mathbf{\pi_{s}} - \mathbf{\pi_{0}} \|_{2}}.$$

\noindent Throughout this proof we will use $< \cdot , \cdot >$  as the Euclidean inner product. \\

\noindent  For $1 \leq j \leq n$  let $c_{j} = <\mathbf{\pi_{s}} - \mathbf{\pi_{0}}, \mathbf{v_{j}}>$.  Then we see that $\mathbf{\pi_{s}} - \mathbf{\pi_{0}} = \sum_{j=1}^{n} c_{j} \mathbf{v_{j}}$.\\

\noindent We have that 
\begin{align*}
\frac{\| (\mathbf{\pi_{s}} - \mathbf{\pi_{0}})(\mathbb{I} - \mathbf{P_{0}}) \|_{2}^{2}}{\| \mathbf{\pi_{s}} - \mathbf{\pi_{0}} \|_{2}^{2}} &= \frac{<(\mathbf{\pi_{s}} - \mathbf{\pi_{0}})(\mathbb{I} - \mathbf{P_{0}}),(\mathbf{\pi_{s}} -\mathbf{\pi_{0}})(\mathbb{I} - \mathbf{P_{0}})>}{<\mathbf{\pi_{s}} - \mathbf{\pi_{0}},\mathbf{\pi_{s}} - \mathbf{\pi_{0}}>} \\
&= \frac{<\mathbf{\pi_{s}} - \mathbf{\pi_{0}}, (\mathbf{\pi_{s}} - \mathbf{\pi_{0}})(\mathbb{I} - \mathbf{P_{0}})(\mathbb{I} - \mathbf{P_{0}})^{T}>}{<\mathbf{\pi_{s}} - \mathbf{\pi_{0}},\mathbf{\pi_{s}} - \mathbf{\pi_{0}}>} \\
&= \frac{<\sum_{j=1}^{n} c_{j} \mathbf{v_{j}}, \sum_{j=1}^{n-1} \sigma_{j}^{2} c_{j} \mathbf{v_{j}}>}{<\sum_{j=1}^{n} c_{j} \mathbf{v_{j}},\sum_{j=1}^{n} c_{j} \mathbf{v_{j}}>} \\
&= \frac{\sum_{j=1}^{n-1} \sigma_{j}^{2} c_{j}^{2}}{\sum_{j=1}^{n} c_{j}^{2}} \\
&\geq \sigma_{n-1}^{2} \frac{\sum_{j=1}^{n-1} c_{j}^{2}}{\sum_{j=1}^{n} c_{j}^{2}} \\
&= \sigma_{n-1}^{2} \left( 1 -\frac{ c_{n}^{2}}{\sum_{j=1}^{n} c_{j}^{2}} \right) \\
&= \sigma_{n-1}^{2} \left( 1 - \left( \frac{<\mathbf{\pi_{s}} - \mathbf{\pi_{0}}, \mathbf{v_{n}}>}{\| \mathbf{\pi_{s}} - \mathbf{\pi_{0}} \|_{2}} \right)^{2} \right).
\end{align*}

\noindent If we let $\mathbf{w}(s) = (\mathbf{\pi_{s}} - \mathbf{\pi_{0}}) \slash \| \mathbf{\pi_{s}} - \mathbf{\pi_{0}} \|_{2}$  then we see that $$ \sigma_{n-1}^{2} \left( 1 - \left( <\mathbf{w}(s), \mathbf{v_{n}}> \right)^{2} \right) \leq s^{2} \frac{\| \mathbf{\pi_{s}}(\mathbf{P_{1}} - \mathbf{P_{0}}) \|_{2}^{2}}{\| \mathbf{\pi_{s}} - \mathbf{\pi_{0}} \|_{2}^{2}}.$$

\noindent Because $\mathbf{w}(s)$  and $\mathbf{v_{n}}$  are unit vectors, we can use the fact that $$\| \mathbf{w}(s) \|_{2}^{2} - 2<\mathbf{w}(s),\mathbf{v_{n}}> + \| \mathbf{v_{n}} \|_{2}^{2} = \| \mathbf{w}(s) - \mathbf{v_{n}} \|_{2}^{2}$$  

\noindent to show that $$1 - <\mathbf{w}(s), \mathbf{v_{n}}> = \frac{1}{2} \| \mathbf{w}(s) - \mathbf{v_{n}} \|_{2}^{2}$$  

\noindent and we can use the fact that $$\| \mathbf{w}(s) \|_{2}^{2} + 2<\mathbf{w}(s),\mathbf{v_{n}}> + \| \mathbf{v_{n}} \|_{2}^{2} = \| \mathbf{w}(s) + \mathbf{v_{n}} \|_{2}^{2}$$ 

\noindent to show that $$1 + <\mathbf{w}(s), \mathbf{v_{n}}> = \frac{1}{2} \| \mathbf{w}(s) + \mathbf{v_{n}} \|_{2}^{2}.$$

\noindent From this we see that $1 - \left( <\mathbf{w}(s), \mathbf{v_{n}}> \right)^{2} = \| \mathbf{w}(s) - \mathbf{v_{n}} \|_{2}^{2} \cdot \| \mathbf{w}(s) + \mathbf{v_{n}} \|_{2}^{2} \slash 4.$  Plugging this into our previous equation, we can see that $$ \frac{\sigma_{n-1}^{2}}{4} \| \mathbf{w}(s) - \mathbf{v_{n}} \|_{2}^{2} \cdot \| \mathbf{w}(s) + \mathbf{v_{n}} \|_{2}^{2} \leq s^{2} \frac{\| \mathbf{\pi_{s}}(\mathbf{P_{1}} - \mathbf{P_{0}}) \|_{2}^{2}}{\| \mathbf{\pi_{s}} - \mathbf{\pi_{0}} \|_{2}^{2}}.$$

\noindent After performing some basic algebra we see that $$ \| \mathbf{\pi_{s}} - \mathbf{\pi_{0}} \|_{2} \leq \frac{2s \| \mathbf{\pi_{s}} (\mathbf{P_{1}} - \mathbf{P_{0}}) \|_{2}}{\sigma_{n-1} \| \mathbf{w}(s) - \mathbf{v_{n}} \|_{2} \cdot \| \mathbf{w}(s) + \mathbf{v_{n}} \|_{2}} .$$

\noindent Notice that $<\mathbf{w}(s), \mathbf{1}> \slash \sqrt{n} = 0$  and $<\mathbf{v_{n}}, \mathbf{1}> \slash \sqrt{n} = 1 \slash \left( \sqrt{n} \| \mathbf{\pi_{0}} \|_{2} \right)$  for all $s \in [0,1]$.  Because these are the scalar components of the projections of $\mathbf{w}(s)$  and $\mathbf{v_{n}}$  onto $\mathbf{1}$  respectively, we see that the minimum possible value for $\| \mathbf{w}(s) - \mathbf{v_{n}} \|_{2}$  and $\| \mathbf{w}(s) + \mathbf{v_{n}} \|_{2}$  is at least $1 \slash \left( \sqrt{n} \| \mathbf{\pi_{0}} \|_{2} \right).$ \\

\noindent We now have that 
\begin{align*}
\| \mathbf{\pi_{s}} - \mathbf{\pi_{0}} \|_{2} &\leq \frac{2s n \| \mathbf{\pi_{0}} \|_{2}^{2} \cdot  \| \mathbf{\pi_{s}} (\mathbf{P_{1}} - \mathbf{P_{0}}) \|_{2}}{\sigma_{n-1}} \\
& \leq \frac{2s n \| \mathbf{\pi_{s}} (\mathbf{P_{1}} - \mathbf{P_{0}}) \|_{2}}{\sigma_{n-1}} \\
& = \frac{2s n \| \mathbf{\pi_{s}} (\mathbf{P_{1}} - \mathbf{P_{0}}) \|_{2}}{\sigma}.
\end{align*}

\noindent Again for $\mathbf{x},\mathbf{y} \in \mathbb{R}^{n}$ such that $\mathbf{x}$  and $\mathbf{y}$  are probability measures, we see that $$ \frac{1}{2} \| \mathbf{x} -\mathbf{y} \|_{2} \leq \| \mathbf{x} - \mathbf{y} \|_{TV} \leq \frac{\sqrt{n}}{2} \| \mathbf{x} - \mathbf{y} \|_{2}.$$

\noindent This will imply that $$\| \mathbf{\pi_{s}} - \mathbf{\pi_{0}} \|_{TV} \leq \frac{2s  n^{3 \slash 2} \| \mathbf{\pi_{s}} ( \mathbf{P_{1}} - \mathbf{P_{0}} )\|_{TV}}{\sigma_{n-1}}.$$

\noindent Because $\mathbf{\pi_{s}}(\mathbf{P_{1}} - \mathbf{P_{0}}) = \mathbf{\pi_{s}} \mathbf{P_{1}} - \mathbf{\pi_{s}} \mathbf{P_{0}}$  is the difference of two probability distributions, we see that $\|  \cdot \|_{TV} = \frac{1}{2} \| \cdot \|_{1}$  where $\| \cdot \|_{1}$  is the $l_{1}$-norm.  This implies that $$\| \mathbf{\pi_{s}}(\mathbf{P_{1}} - \mathbf{P_{0}}) \|_{TV} = \frac{1}{2} \| \mathbf{\pi_{s}} \mathbf{P_{1}} - \mathbf{\pi_{s}} \mathbf{P_{0}} \|_{1} \leq \frac{1}{2} \left( \| \mathbf{\pi_{s}} \mathbf{P_{1}} \|_{1} + \| \mathbf{\pi_{s}} \mathbf{P_{0}} \|_{1} \right) \leq 1.$$

\noindent This shows that $$\| \mathbf{\pi_{s}} - \mathbf{\pi_{0}} \|_{TV} \leq \frac{2s n^{3 \slash 2}}{\sigma}.$$

\noindent Clearly if $\epsilon > 0$, then $$s \leq \delta = \frac{ \epsilon \sigma}{2 n^{3 \slash 2}}$$

\noindent implies $\| \mathbf{\pi_{s}} - \mathbf{\pi_{0}} \|_{TV} \leq \epsilon$. \\

\noindent This shows that $\mathbf{\pi_{s}}$  is continuous at $s = 0$.


\begin{thebibliography}{10}

\bibitem{aldous2002reversible}
D.~Aldous and J.A. Fill,
\newblock{\em {Reversible Markov chains}},
\newblock stat-www.berkeley.edu, 2002.

\bibitem{ambainis2006elementary}
A.~Ambainis and O. Regev,
\newblock{\em {An Elementary Proof of the Quantum Adiabatic Theorem}},
\newblock arXiv:quant-ph/0411152v2

\bibitem{bradford2011adiabatic}
K.~Bradford and Y. Kovchegov,
\newblock{\em {Adiabatic times for Markov chains and their applications}},
\newblock Journal of Statistical Physics, Vol. $\mathbf{143}$, 2011, pp. 955-969.

\bibitem{bremaud2010markov}
P.~Br$\acute{\mathrm{e}}$maud,
\newblock{\em {Markov Chains: Gibbs fields, Monte Carlo Simulation, and Queues}},
\newblock Springer Science+Business Media Inc. Texts in Applied Mathematics, Vol. $\mathbf{31}$, 2010.

\bibitem{fock2004selected}
V.A.~Fock,
\newblock{\em {Selected Works: Quantum Mechanics and Quantum Field Theory}},
\newblock Chapman $\And$ Hall/CRC, 2004.

\bibitem{isaacson1976markov}
D.L.~Isaacson and R.W. Madsen,
\newblock{\em {Markov Chains: Theory and Applications}},
\newblock John Wiley, New York, 1976.

\bibitem{karlin1975first}
S.~Karlin and H.M. Taylor,
\newblock{\em {A first course in Stochastic Processes}},
\newblock Academic Press, 1975.

\bibitem{kato1950on}
T.~Kato,
\newblock{\em {On the Adiabatic Theorem of Quantum Mechanics}},
\newblock Journal of t he Physical Society of Japan, Vol. $\mathbf{5}$, pp. 435 - 439, 1950.

\bibitem{kovchegov2010note}
Y.~Kovchegov,
\newblock{\em {A note on adiabatic theorem for Markov chains}},
\newblock Statistics and Probability Letters, Vol. $\mathbf{80}$, 2010, pp. 186-190.

\bibitem{krovi2010adiabatic}
H.~Krovi, M. Ozols and J. Roland,
\newblock{\em {Adiabatic condition and the quantum hitting time of Markov chains}},
\newblock Physical Review A $\mathbf{82}$, 022333, 2010.

\bibitem{levin2009markov}
D.~Levin, Y. Peres and E. Wilmer,
\newblock{\em {Markov Chains and Mixing Times}},
\newblock American Mathematical Society, 2009.

\bibitem{rajagopalan2009network}
S.~Rajagopalan, D. Shah and J. Shin,
\newblock{\em {Network Adiabatic Theorem:  An efficient Randomized Protocol for Contention Resolution}},
\newblock ACM 978-1-60558-511-6/09/06, 2009.

\bibitem{ross2006simulation}
S.~Ross,
\newblock{\em {Simulation: Fourth Ed.}},
\newblock Elsevier Academic Press, 2006.

\bibitem{saloff2006convergence}
L.~Saloff-Coste and J. Z$\acute{\mathrm{u}} \tilde{\mathrm{n}}$iga,
\newblock{\em {Covergence of some time inhomogeneous Markov chains via spectral techniques}},
\newblock Stochastic Processes and their Applications, Vol. $\mathbf{117}$, 2007, pp. 961 - 979.

\bibitem{saloff2009merging}
L.~Saloff-Coste and J. Z$\acute{\mathrm{u}} \tilde{\mathrm{n}}$iga,
\newblock{\em {Merging for time-inhomogeneous finite Markov chains, Part I: singular values and stability}},
\newblock Electronic Journal of Probability, Vol. $\mathbf{14}$, 2009, pp. 1456-1494.

\bibitem{saloff2011merging}
L.~Saloff-Coste and J. Z$\acute{\mathrm{u}} \tilde{\mathrm{n}}$iga,
\newblock{\em {Merging for time-inhomogeneous finite Markov chains, Part II: Nash and log-Sobolev inequalities}},
\newblock Annals of Probability, Vol. $\mathbf{39}$ No. $\mathbf{3}$, 2011, pp. 1161 - 1203.

\bibitem{zacharias2011adiabatic}
L.~Zacharias, T. ~Nguyen, Y.~Kovchegov, K.~Bradford,
\newblock{\em {An Adiabatic Approach to Analysis of Time-Inhomogeneous Markov chains: a Queueing Policy Application}},
\newblock GLOBECOM, 2012.
\end{thebibliography}
\end{document}